\numberwithin{equation}{section}
\newtheorem{theorem}{Theorem}[section]
\newtheorem{lemma}[theorem]{Lemma}
\newtheorem{corollary}[theorem]{Corollary}
\newtheorem{proposition}[theorem]{Proposition}
\allowdisplaybreaks \numberwithin{equation}{section}
\newcommand{\weglassen}[1]{}
\renewcommand{\imath}{\mathrm{i}}
\renewcommand{\d}{\mathrm{d}}
\def\ma#1#2#3#4{\left[{}^{#1}_{#3}{}^{#2}_{#4}\right]_2}
\begin{document}
\title[Rosenhain-Thomae formulae]{Rosenhain-Thomae formulae for higher genera hyperelliptic curves}

\author{Keno Eilers}
\address{Faculty of Mathematics, University of Oldenburg,
 Carl-von-Ossietzky-Str. 9-11, 26129 Oldenburg, Germany}
\email{keno.eilers@uni-oldenburg.de}

\keywords{Theta Functions, Rosenhain formula, Thomae Formula}
\subjclass{14H42,35Q15}

\begin{abstract} Rosenhain's famous formula expresses the periods of first kind integrals of genus two 
hyperelliptic curves in terms of $\theta$-constants. In this paper we generalize the Rosenhain formula to higher genera hyperelliptic curves by means of the second Thomae formula for derivated non-singular $\theta$-constants. 
\end{abstract}

\maketitle
\section{Introduction}
The developments of the theory of algebraic curves (and related theories) in the XIX-th century led to the idea of describing and classifying objects relevant to algebraic curves and their Jacobians in terms of their modular forms,
the Riemann $\theta$-functions, which depend on the Riemann period matrix $\tau$. In this respect, a lot of work was accomplished for (hyper-)elliptic curves of genus $1$ and $2$.
In this paper we want to generalize the existing results primarily due to Rosenhain and discuss here such representations of periods of higher genera hyperelliptic integrals.\\
The Riemann period matrix $\tau$ is defined as the quotient, 
$\tau = \mathcal{A}^{-1} \mathcal{B}$ of the $\mathcal{A}$- and $\mathcal{B}$- period matrices of holomorphic integrals. Here, the leading question is the inverse problem: Given the Riemann period matrix $\tau$, how can we express the period matrix $\mathcal{A}$ in terms of $\theta$-constants and, possibly, invariants of the curve?\\
The $\theta$-constant representation of a complete elliptic integral, 
$$K=\frac{\pi}{2}\theta_{3}^2(0),$$ was known since Jacobi's times. Rosenhain, Jacobi's student, obtained a generalization of this formula to genus-$2$-curves in terms of $\theta$-constants with characteristics \cite{ros851}. To remind this
result we introduce a genus two curve,
\begin{equation}
y^2 = x(x-1)(x-a_1 )(x- a_2 )(x- a_3 ),\quad a_i\in \mathbb{C}. 
\end{equation}
For a given Riemann matrix $\tau$, we denote as $\mathcal{A}$ and $\mathcal{B} = \mathcal{A}\tau$ the period matrices. Also 
we define $\theta$-constants with even characteristics $[\varepsilon]= 
\left[ {}^{\varepsilon_1'}_{\varepsilon_1}  {}^{\varepsilon_2'}_{\varepsilon_2}  \right]$, 
$\boldsymbol{\varepsilon}'\boldsymbol{\varepsilon}^T=0 \; \mathrm{mod}\; 2$,  
$\varepsilon_i,\varepsilon_j'\in \mathbb{Z}_+$ as the Riemann-$\theta$-functions (with characteristics) evaluated at zero. That is, specifically:
\begin{align}\begin{split}
\theta[\varepsilon] =\sum_{\boldsymbol{n}\in \mathbb{Z}^2}  \mathrm{exp} \;  \imath \pi 
\left\{
\left(n_1+\frac{\varepsilon_1'}{2}  \right)^2\tau_{1,1} +2\left(n_1+\frac{\varepsilon_1'}{2}  \right)
\left(n_2+\frac{\varepsilon_2'}{2}  \right) \tau_{1,2}+\left(n_2+\frac{\varepsilon_2'}{2}  \right)^2\tau_{2,2}\right.\\
 \left.+\varepsilon_1\left(n_1+\frac{\varepsilon_1'}{2}  \right)+\varepsilon_2\left(n_2+\frac{\varepsilon_2'}{2}  \right)  
\right\}\neq 0.\end{split}\label{eventhetaconst}
\end{align}
The derivated odd $\theta$-constants for odd characteristics $[\delta]= 
\left[ {}^{\delta_1'}_{\delta_1}  {}^{\delta_2'}_{\delta_2}  \right]$, 
$\boldsymbol{\delta}'\boldsymbol{\delta}^T=1 \; \mathrm{mod}\; 2$,  
$\delta_i,\delta_j'\in \mathbb{Z}_+$, are defined as the derivation of the Riemann-$\theta$-functions, evaluated at zero:
\begin{align}\begin{split}
\theta_i[\delta] =2\imath\pi \sum_{\boldsymbol{n}\in \mathbb{Z}^2} \left(n_i+ \frac{\delta'_i}{2} \right)  
&\mathrm{exp} \; 
 \imath{\pi} \left\{
\left(n_1+\frac{\delta_1'}{2}  \right)^2\tau_{1,1} +2\left(n_1+\frac{\delta_1'}{2}  \right)
\left(n_2+\frac{\delta_2'}{2} \right) \tau_{1,2}\right. \\
 &\left.+\left(n_2+\frac{\delta_2'}{2}  \right)^2\tau_{2,2}
+\delta_1\left(n_1+\frac{\delta_1'}{2}  
\right)+\delta_2\left(n_2+\frac{\delta_2'}{2}  \right)\right\}\neq 0\end{split}\label{oddthetaconst}, \quad i=1,2
\end{align}
For genus-$2$-curves, there are 16 characteristics. 6 of them are odd and 10 even and we denote the sets of characteristics 
as $\mathcal{S}_6$ and $\mathcal{S}_{10}$, correspondingly. Odd characteristics are in $1-1$ correspondence 
with the branch points, $(0,1,a_1,a_2,a_3,\infty)$, in a way which will become clear in the subsequent sections.\\
\begin{figure}[h]
\begin{center}
\unitlength 0.7mm \linethickness{0.6pt}
\begin{picture}(150.00,80.00)
\put(9.,33.){\line(1,0){12.}} \put(9.,33.){\circle*{1}}
\put(21.,33.){\circle*{1}} \put(10.,29.){\makebox(0,0)[cc]{$e_1$}}
\put(21.,29.){\makebox(0,0)[cc]{$e_2$}}
\put(15.,33.){\oval(20,30.)}
\put(8.,17.){\makebox(0,0)[cc]{$\mathfrak{ a}_1$}}
\put(15.,48.){\vector(1,0){1.0}}
\put(32.,33.){\line(1,0){9.}} \put(32.,33.){\circle*{1}}
\put(41.,33.){\circle*{1}} \put(33.,29.){\makebox(0,0)[cc]{$e_3$}}
\put(42.,29.){\makebox(0,0)[cc]{$e_4$}}
\put(37.,33.){\oval(18.,26.)}
\put(30.,19.){\makebox(0,0)[cc]{$\mathfrak{a}_2$}}
\put(36.,46.){\vector(1,0){1.0}}
\put(100.,33.00) {\line(1,0){33.}} \put(100.,33.){\circle*{1}}
\put(133.,33.){\circle*{1}}
\put(101.,29.){\makebox(0,0)[cc]{$e_{5}$}}
\put(132.,29.){\makebox(0,0)[cc]{$e_{6}=\infty$}}
\put(59.,58.){\makebox(0,0)[cc]{$\mathfrak{b}_1$}}
\put(63.,62.){\vector(1,0){1.0}}
\bezier{484}(15.,33.00)(15.,62.)(65.,62.)
\bezier{816}(65.00,62.)(119.00,62.00)(119.00,33.00)
\bezier{35}(15.,33.00)(15.,5.)(65.,5.)
\bezier{35}(65.00,5.)(119.00,5.00)(119.00,33.00)
\put(70.,44.){\makebox(0,0)[cc]{$\mathfrak{b}_2$}}
\put(74.00,48.){\vector(1,0){1.0}}
\bezier{384}(37.,33.00)(37.,48.)(76.00,48.)
\bezier{516}(76.00,48.)(111.00,48.00)(111.00,33.00)
\bezier{30}(37.,33.00)(37.,19.)(76.00,19.)
\bezier{30}(76.00,19.)(111.00,19.00)(111.00,33.00)
\end{picture}
\end{center}
\caption{Homology basis of the Riemann surface of the curve
$V(x,y)$ with real branching points $e_1 < e_2 <\ldots <
e_{6}=\infty$ (upper sheet).  The cuts are drawn from $e_{2i-1}$
to $e_{2i}$, $i=1,2,3$.  The $\mathfrak b$--cycles are completed
on the lower sheet (dotted lines).} \label{figure-1}
\end{figure}
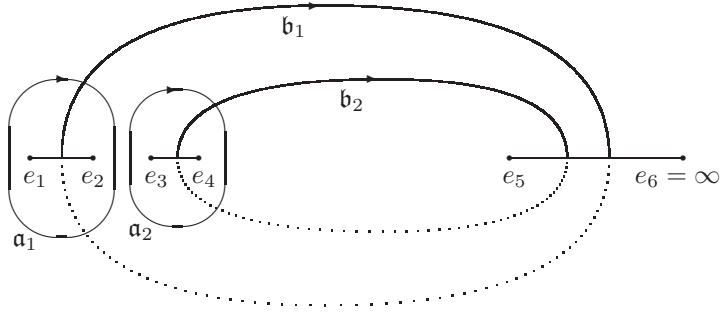

Rosenhain's central theorem is taken from his only known publication\footnote{Wikipedia tells us: "Rosenhain galt auch als begabt in Sprachen und Musik; allerdings bemerkten einige Beobachter, dass er die hohen Erwartungen seiner jungen Jahre nicht erf\"ullte und nach seiner preisgekr\"onten Arbeit keine nennenswerten Beitr\"age mehr ver\"offentlichte."}, where it was indeed unproven. We, too, give it here without proof:
\begin{theorem}[Rosenhain's modular representation of the period matrix $\mathcal{A}$]
In the homology basis given on Fig. \ref{figure-1}, there are characteristics
\begin{align}
\begin{split}
&[\alpha_1] =[_{00}^{11}], \quad  [\alpha_2] =[_{10}^{00}], \quad  [\alpha_3] =[_{01}^{10}]\\
&[\beta_1] =[_{10}^{01}], \quad  [\beta_2] =[_{00}^{10}], \quad  [\beta_3] =[_{11}^{00}]
\end{split}
\end{align}
and 
\begin{equation}
[\delta_1] := [\alpha_1]+[\alpha_2]+[\alpha_3] \mod\; 2 \ = [_{11}^{01}] , \quad  
[\delta_2] := [\beta_1]+[\beta_2]+[\beta_3] \mod\; 2 \ = [_{01}^{11}],
\end{equation} 
such that
\begin{equation}
\mathcal{A}^{-1} = \frac{1}{2\pi^2 Q^2}\left( \begin{array}{rr}  
-P \theta_2[\delta_1]& Q\theta_2[\delta_2]\\
P \theta_1[\delta_1]&-Q \theta_1[\delta_2]
  \end{array} \right), \quad a_1a_2a_3 = \frac{P^4}{Q^4} \label{rosenhain2}
\end{equation}
with the quantities $P$ and $Q$ as abbreviations for:
\begin{equation}
P= \theta[\alpha_1]\theta[\alpha_2]\theta[\alpha_3], \quad Q= \theta[\beta_1]\theta[\beta_2]
\theta[\beta_3] 
 \label{pq}
\end{equation}
\end{theorem}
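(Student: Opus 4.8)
The plan is to establish Rosenhain's formula by connecting the period matrix $\mathcal{A}$ to theta-constants through two independent bridges: the classical relation between periods of holomorphic differentials and theta-derivatives at odd half-periods (coming from the vanishing properties of the theta-function on the theta-divisor), and the Thomae-type formulae expressing even theta-constants in terms of the branch points $(0,1,a_1,a_2,a_3,\infty)$.

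First I would fix the homology basis of Fig.~\ref{figure-1} and the standard basis of holomorphic differentials $\d u_i = x^{i-1}\d x/y$, $i=1,2$, so that $\mathcal{A}_{ij}=\oint_{\mathfrak a_j}\d u_i$ and $\mathcal{B}=\mathcal{A}\tau$. The central analytic input is the Riemann vanishing theorem together with the fact that, for an odd half-period $\tfrac12(\boldsymbol\delta+\tau\boldsymbol\delta')$ corresponding under the Abel map to a branch point $e_k$, the gradient $\bigl(\theta_1[\delta],\theta_2[\delta]\bigr)$ is proportional to the vector of holomorphic differentials evaluated at that branch point. Concretely, the directional data encoded in $\theta_i[\delta]$ reproduces, up to a common scalar, the row of $\mathcal{A}^{-1}$ paired against the differential basis at $e_k$; this is exactly the mechanism by which $\theta_1[\delta_j]$ and $\theta_2[\delta_j]$ enter the two columns of the claimed matrix. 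I would make precise the correspondence between the odd characteristics $[\delta_1],[\delta_2]$ and the relevant branch points, and verify that these are the two characteristics singled out as sums of the three $\alpha$'s and three $\beta$'s respectively, as stated.

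Next I would bring in the Thomae side. The quantities $P$ and $Q$ are products of three even theta-constants each, and by the first (ordinary) Thomae formula each $\theta[\cdot]^4$ is a constant multiple of a product of differences of branch points determined by the associated partition of $\{0,1,a_1,a_2,a_3,\infty\}$. Computing $P^4/Q^4$ by substituting these Thomae expressions, one finds that the branch-point differences organize themselves so that all factors cancel except $a_1a_2a_3$, which yields the normalization $a_1a_2a_3=P^4/Q^4$. This fixes the scalar ambiguity left undetermined by the purely analytic gradient argument, pinning down the prefactor $1/(2\pi^2Q^2)$ and the signs $\mp P,\pm Q$ in the two columns.

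The main obstacle will be the bookkeeping of characteristics and signs: one must track the precise $\mathbb{Z}_2$-assignment of characteristics to branch points, the transformation behaviour of the theta-gradients under the chosen homology basis, and the fourth-root extractions in Thomae's formula, where choosing the correct branch of the root (and hence the correct sign in each matrix entry) is delicate and is what makes the identity hold on the nose rather than merely up to a sign. I would therefore carry out the gradient computation and the Thomae substitution in parallel, cross-checking the overall scaling against the known genus-one specialization $K=\tfrac{\pi}{2}\theta_3^2(0)$ to confirm that the constant $2\pi^2$ and the power $Q^2$ are correct.
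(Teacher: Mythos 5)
Your outline follows the same two bridges that the paper itself uses when it recovers this theorem in Section 4 --- odd theta-gradients versus periods, and Thomae formulae --- but it stalls exactly where the real work lies. The Riemann-vanishing/Gauss-map argument you invoke only yields that the gradient $(\theta_1[\delta],\theta_2[\delta])$ annihilates the canonical image of the corresponding branch point (it is \emph{conormal} to $\mathcal{A}^{-1}(1,e_k)^T$, not ``proportional to the vector of holomorphic differentials'' as you write --- in genus $2$ the difference is the $90^{\circ}$ rotation that is responsible for the $-\theta_2,\ \theta_1$ pattern in the columns). Consequently each column of $\mathcal{A}^{-1}$ is determined only up to a separate unknown scalar, one for $[\delta_1]$ and one for $[\delta_2]$, and these scalars depend on the branch points. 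Your proposed fix --- that the identity $a_1a_2a_3=P^4/Q^4$ ``fixes the scalar ambiguity'' --- is a non sequitur: that identity is a consequence of the first Thomae formula (\ref{firstthomae}) alone, contains neither of the two unknown scalars, and therefore cannot pin down the prefactors $P/(2\pi^2Q^2)$ and $1/(2\pi^2Q)$ of the two columns. A genus-one degeneration likewise can only test the universal constant, not the theta-constant-valued factors $P$ and $Q$.

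What actually closes this gap in the paper is quantitative, and none of it (nor any substitute) appears in your outline: (i) the second Thomae theorem (\ref{secondthomae}), proved via the quotient identity (\ref{thetaquotlemma}), which gives the missing proportionality constants explicitly as $\epsilon\left(\det\mathcal{A}/2^{g+2}\pi^{g}\right)^{1/2}\nabla^{1/4}(\mathcal{I}_1^{(n)})$; (ii) the Riemann--Jacobi/Rosenhain derivative formula (\ref{Rosder}), $\theta_1[\varepsilon_2]\theta_2[\varepsilon_1]-\theta_1[\varepsilon_1]\theta_2[\varepsilon_2]=\pi^2\Theta_{\{1,2\}}\theta[\varepsilon_{345}]$, which is needed to turn $\det J$ into even theta-constants when inverting the matrix form (\ref{secondthomaematrix}), and which is the actual source of the $\pi^2$ and of the power $Q^2$ in the prefactor; and (iii) Theorem \ref{generalchi} together with Lemma \ref{triplerel2}, which eliminate the residual fourth roots $\sqrt[4]{\chi_k}$ of branch-point differences and produce the $\theta_n[K_{\infty}]$ entries of the second column, cf.\ the passage from (\ref{ainverseg2}) to (\ref{genus2Aminus1}). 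Without these three ingredients your argument terminates at ``$\mathcal{A}^{-1}$ equals a matrix of odd theta-gradients times an undetermined diagonal matrix,'' which is strictly weaker than Rosenhain's statement; supplying them is not bookkeeping but is precisely the content of the paper's proof.
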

This formula was proven by H.Weber \cite{web879} during his course of deriving special case solutions of the Clebsh problem on the motion of a rigid body in an ideal liquid, and later by O.Bolza in his dissertation 
devoted to the reduction of genus-$2$ holomorphic integrals to elliptic integrals (\cite{bol885}, a shorter version was published in \cite{bol887}).\\  
In more recent times, the problem of a $\theta$-constant representation of $\mathcal{A}$ was discussed within Novikov's program of "effectivization of finite-gap integration formulae", see e.g. Dubrovin \cite{dub981}. E.Belokolos and V.Enolskii \cite{be01} implemented this representation in their approach to the reduction of $\theta$-functional solutions of completely integrable equations to elliptic functions. Nart and Ritzenthaler (\cite{nr17}) used a \emph{Thomae-type} formula for non-hyperelliptic genus-$3$ curves, derived from Weber's formula (\cite{web876} and more recently \cite{fio16}), but did not apply the found $\theta$-constants to the problem of representation of $\mathcal{A}$. An attempt of a generalization of Rosenhain's work can be found in \cite{tak996} and below in Corollaries \ref{cor1} and \ref{cor2}.\\
Some of the build-up of this work can also be found in \cite{er08}, in especially the recovery of Rosenhain's formula by Thomae's second formula. Indeed, it was V. Enolskii, who brought the topic to our attention, and we believe to have generalized their previous contributions.\\[0.25cm]
Below, one of our goals is to elucidate the role that these specific characteristics play in Rosenhain's formula. For that purpose, the next section is dedicated to the first and second Thomae Formulae in higher genera. 
In the 3rd section we go on with the attempt to express the period matrix $\mathcal{A}$ solely by $\theta$-constants, which will then be completed exemplarily in the 4th and 5th section for genus $2$ and $3$ and in doing so we will broader the class of characteristics which fulfill eq. (\ref{rosenhain2}) and its higher-genus analogues.\\
We believe our results will be of general interest as both for theory and for numerical calculations 
of complete hyperelliptic integrals.

\section{Thomae formulae for hyperelliptic curves}

The seminal paper from Thomae [Tho870] is mostly known for the formula relating branch points to even $\theta$-constants of a genus-$g$ hyperelliptic curve $C$. But the paper also contains a formula for non-singular derivated odd $\theta$-constants without a proof. In this section we give an elementary proof..

\subsection{Curve and differentials} Let the curve $C$ be of the form
\begin{align}
y^{2}=\lambda_{2g+2}x^{2g+2}+\lambda_{2g+1}x^{2g+1}+\ldots+\lambda_{0}.
\end{align}
Fix a basis of holomorphic differentials $\mathrm{d}u(P)=(\mathrm{d}u_{1}(P),\ \ldots,\ \mathrm{d}u_{g}(P))^{T},$
\begin{align}
\mathrm{d}u_{i}(P)=\frac{x^{i-1}}{y}\mathrm{d}x,\ i=1,...,g,
\end{align}
and a canonical homology basis $(a,\, b)$. Denote $a$- and $b$-periods
\begin{align}
\mathcal{A}=\left(\oint_{a_k}\mathrm{d}u_{i}\right)_{i,k=1,\ldots,g}\ ,\ B=\left(\oint_{b_{k}}\mathrm{d}u_{i}\right)_{i,k=1,\ldots,g}.
\end{align}
The normalised holomorphic differentials $\mathrm{d}v(P)=(\mathrm{d}v_{1}(P),\ \ldots,\ \mathrm{d}v_{g}(P))$ are defined as
\begin{align}
\mathrm{d}v=\mathcal{A}^{-1}\mathrm{d}u\ \leftrightarrow\oint_{a_{j}}\mathrm{d}v_{i}=\delta_{i,j},\ \oint_{\mathrm{b}_{j}}\mathrm{d}v_{i}=\tau_{i,j},
\end{align}
where the $g\times g$ Riemann matrix $\tau=\mathcal{A}^{-1}\mathcal{B}$ belongs to the Siegel upper half-space $\mathfrak{S}=\{\tau^{T}= \tau,\ {\rm Im}\, \tau>0\}$. Denote $\mathrm{J}\mathrm{a}\mathrm{c}(C)=\mathbb{C}^{g}/\Gamma$ the Jacobi variety of the curve $C$, where $\Gamma=1_{g}\oplus\tau$. Any point $v$ on the Jacobi variety can be represented in the form
\begin{align}
v=\frac12\varepsilon+\frac12 \tau\varepsilon'\ \varepsilon,\ \varepsilon'\in \mathbb{R}^{g}
\label{pointonjacobian}
\end{align}
The vectors $\varepsilon$ and $\varepsilon'$ combine to a $2\times g$ matrix named the characteristic $\left[\varepsilon\right]$ of the point $v$. If $v$ is a half-period then all entries of the characteristic are equal 0 or 1 modulo 2.

\subsection{Theta-functions} Next we introduce in greater detail the Riemann-$\theta$-function $\theta[\varepsilon](z;\tau)$ , $z\in \mathbb{C}^{g}, \tau\in \mathfrak{S}_{g}$
\begin{align}
\begin{split}
\theta[\varepsilon](z;\tau)&=\mathrm{e}^{\frac14\mathrm{i}\pi\varepsilon'^{T}\tau\varepsilon'+\varepsilon'^{T}(z+\frac12\varepsilon)}
\theta\left(z+\frac12\tau\varepsilon'+\frac12\varepsilon\right)\\
&=\sum_{m\in \mathbb{Z}g}\mathrm{exp}\left\{\mathrm{i}\pi\left(m+\frac12\varepsilon'\right)^{T}
\tau\left(m+\frac12\varepsilon'\right)+2\mathrm{i}\pi\left(z+\frac12\varepsilon\right)^{T}
\left(m+\frac12\varepsilon'\right)\right\}.
\end{split}
\end{align}
 with the binary characteristic 
$$\left[\varepsilon\right]=\left[\begin{array}{c}\boldsymbol{\varepsilon}'^T\\ \boldsymbol{\varepsilon}^T\end{array}\right]=\left[\begin{array}{c}\varepsilon_{1}',\ldots,\varepsilon_{g}'\\ \varepsilon_{1},\ldots,\varepsilon_{g}\end{array}\right], \quad \varepsilon_{i}, \varepsilon_{j}'=1\;\text{or}\; 0 $$
It possesses the periodicity property
\begin{align}
\theta[\varepsilon](v+n+\tau n';\tau)=\mathrm{e}^{-2\mathrm{i}\pi n^{\prime T}(v+\frac{1}{2}\tau n')}\mathrm{e}^{\mathrm{i}\pi(n^{T}\varepsilon'-{n'}^T\varepsilon)}\theta[\varepsilon](v;\tau).
\label{periodicity}
\end{align}
The property (\ref{periodicity}) implies
\begin{align}
\theta[\varepsilon](-v;\tau)=\mathrm{e}^{-\pi \mathrm{i}\varepsilon'\varepsilon^{T}}\theta[\varepsilon](v;\tau).
\end{align}
Therefore $\theta[\varepsilon](z;\tau)$ is even if $\varepsilon'\varepsilon^{T}$ is even and odd otherwise. The corresponding characteristic is called even or odd. Among $4^{g}$ characteristics there are $(4^{g}+2^{g})/2$ even and $(4^{g}-2^{g})/2$ odd.\\
Following the notion of Kra\texttt{}zer \cite{kra903}, p. 283, a triplet of characteristics $[\varepsilon_{1}], [\varepsilon_{2}], [\varepsilon_{3}]$ is called {\it azygetic} if
\begin{align}
(-1)^{\varepsilon_{1}'\varepsilon_{1}^{T}+\varepsilon_{2}'\varepsilon_{2}^{T}+\varepsilon_{3}'\varepsilon_{3}^{T}+(\varepsilon_{1}'+\varepsilon_{2}'+\varepsilon_{3}')(\varepsilon_{1}+\varepsilon_{2}+\varepsilon_{3})^{T}}=-1.
\label{azygetic}
\end{align}
and a sequence of $2g+2$ characteristics $[\varepsilon_{1}],\ldots,[\varepsilon_{2g+2}]$ is called a {\it special fundamental system} if the first $g$ characteristics are odd, the remaining are even and any triple of characteristics in it is azygetic.\\
The values $\theta[\varepsilon](0;\tau)=\theta[\varepsilon]$ are called $\theta$-constants. An even characteristic $[\varepsilon]$ is non-singular if $\theta[\varepsilon]\neq 0$, an odd characteristic $[\delta]$ is called non-singular if the derivative $\theta$-constants, $\left.\theta_{k}[\delta]=\partial\theta(z;\tau)/\partial z_{k}\right|_{z=0},$ are not vanishing at least for one index $k.$\\

As it is implied in eq.(\ref{pointonjacobian}) we can identify any branch point $e_{i}$ of the curve $C$ with a half-period,
\begin{align}
\boldsymbol{\mathfrak{A}}_{j}=\int_{P_{0}}^{(e_{j},0)}\mathrm{d}v=\frac12\varepsilon_{j}+\frac12\tau\varepsilon_{j}'
\end{align}
where $P_{0}$ is the base point of the Abel map which is supposed to be a branch point and the integer $2\times 2g$-matrix 
$[\varepsilon]$ is a characteristic of $\mathfrak{A}_{j}$.  We agree to denote with $[\mathfrak{A}_{j}]$ the characteristic of the j-th half-period $\mathfrak{A}_{j}.$

\begin{proposition}{[FK980]} The homology basis $(a,\ b)$ is completely defined by the characteristics $[\mathfrak{A}_{j}]$, $j=1,\ldots,2g+2$. $g$ of these characteristics are odd and the remaining $g+2$ are even. The vector of Riemann constants $K_{P_{0}}$ with a base point $P_{0}$ from the set of branch points is defined in the given homology basis as
\begin{align}
\boldsymbol{K}_{P_0}=\sum_{all\, g\, odd\, [\boldsymbol{\mathfrak{A}}_{j}]}\boldsymbol{\mathfrak{A}}_{j}
\end{align}
\label{chardefhom}
\end{proposition}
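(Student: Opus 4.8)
The plan is to reduce every assertion to an explicit computation in the standard ordered-branch-point basis of Fig.~\ref{figure-1} and then propagate to an arbitrary canonical basis by symplectic equivariance. First I would check that each image $\mathfrak{A}_j$ really is a half-period. For two branch points $e_j,e_k$ the rational function $(x-e_j)/(x-e_k)$ on $C$ has divisor $2(e_j,0)-2(e_k,0)$, since $x-e_j$ vanishes to order two at the Weierstrass point $(e_j,0)$ and the two contributions at infinity cancel; hence $2(e_j,0)\sim 2(e_k,0)$. Taking $P_0=(e_k,0)$ gives $2\mathfrak{A}_j\equiv 0$ in $\mathrm{Jac}(C)$, so $\mathfrak{A}_j=\tfrac12\varepsilon_j+\tfrac12\tau\varepsilon_j'$ with binary $\varepsilon_j,\varepsilon_j'$, exactly the form (\ref{pointonjacobian}). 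This step uses only the hyperelliptic geometry and is base-point independent beyond requiring $P_0$ itself to be a branch point.

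Next, with $P_0=(e_{2g+2},0)$ and the cuts drawn from $e_{2k-1}$ to $e_{2k}$, I would integrate $\mathrm{d}v$ along the $a$- and $b$-cycles to record each characteristic $[\mathfrak{A}_j]$ explicitly. A direct reading of the parities $\varepsilon_j'\varepsilon_j^{T}\bmod 2$ then shows that exactly $g$ of the images are odd (in the standard tabulation these are the images of $e_1,e_3,\ldots,e_{2g-1}$, for which $\varepsilon'\varepsilon^{T}\equiv 1$), while the remaining $g+2$ images, namely those of the other branch points together with the base point, give $\varepsilon'\varepsilon^{T}\equiv 0$. Since the count $g$ odd and $g+2$ even is preserved under the action of $\mathrm{Sp}(2g,\mathbb{Z})$ on characteristics (which preserves parity), it holds for every canonical homology basis. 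This is a finite bookkeeping computation once the explicit characteristics are in hand.

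For the vector of Riemann constants I would combine Riemann's vanishing theorem with the hyperelliptic structure. Since $K_{P_0}$ satisfies $2K_{P_0}\equiv-\mathcal{A}(\mathcal{K}_C)$ and the canonical class of a hyperelliptic curve is $\mathcal{K}_C\sim(2g-2)P_0$ for a Weierstrass base point, one gets $\mathcal{A}(\mathcal{K}_C)\equiv 0$, so $K_{P_0}$ is itself a half-period. To identify \emph{which} one, note that each half-period $\mathfrak{A}_j$ with odd characteristic is automatically a zero of $\theta(z)$, so by Riemann vanishing $\mathfrak{A}_j=\mathcal{A}(D_j)-K_{P_0}$ for an effective divisor $D_j$ of degree $g-1$, which on $C$ is linearly equivalent to a sum of $g-1$ branch points. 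Tracking these relations across the $g$ odd indices constrains $K_{P_0}$ to a single half-period; the cleanest verification, and the one I would actually carry out, is to sum the tabulated odd characteristics $\sum_{k=1}^{g}[\mathfrak{A}_{2k-1}]$ in the explicit basis and check that it coincides with the tabulated characteristic of $K_{P_0}$, yielding $K_{P_0}=\sum_{\text{odd }[\mathfrak{A}_j]}\mathfrak{A}_j$.

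Finally, the claim that the homology basis is completely defined by the collection $\{[\mathfrak{A}_j]\}$ follows from the fact that, in the hyperelliptic case, a canonical basis is equivalent data to an ordering of the $2g+2$ branch points modulo the symmetries preserving the cut system: the construction above turns such an ordering into the characteristic system, and conversely the characteristics recover the ordering and hence the cycles, the whole correspondence being $\mathrm{Sp}(2g,\mathbb{Z})$-equivariant. The main obstacle is the third step. Pinning down $K_{P_0}$ as \emph{exactly} the sum of the $g$ odd characteristics is delicate because, $K_{P_0}$ being only $2$-torsion, naively summing the Riemann-vanishing relations leaves a $g\,K_{P_0}$ term whose contribution depends on the parity of $g$; it is here that the hyperelliptic input, the relations $2(e_j,0)\sim 2(e_k,0)$ and the structure of effective branch-point divisors of degree $g-1$, together with a careful evaluation in the explicit basis, must be used to close the argument cleanly.
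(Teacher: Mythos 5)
The first thing to note is that the paper contains no proof of Proposition~\ref{chardefhom} at all: it is quoted from \cite{fk980}, and the paper only uses it as input for the proposition on special fundamental systems that immediately follows. So your attempt can only be judged against the classical argument, not an in-paper one. On those terms, your first two steps are sound and standard: the divisor of $(x-e_j)/(x-e_k)$ does show that each $\boldsymbol{\mathfrak{A}}_j$ is a half-period, and the count of $g$ odd versus $g+2$ even characteristics is a finite verification in one explicit basis which transports to any other basis because the affine symplectic action on characteristics preserves parity. One caveat: in the basis of Fig.~\ref{figure-1} the odd characteristics are attached to the \emph{even}-indexed branch points $e_2,e_4,\ldots,e_{2g}$ (compare eq.~(\ref{sfs2}), where $[\boldsymbol{\mathfrak{A}}_2]$ and $[\boldsymbol{\mathfrak{A}}_4]$ are the odd ones), not to $e_1,e_3,\ldots,e_{2g-1}$; this is harmless in itself, but it shows the tabulation must actually be carried out in the stated basis rather than imported from a different convention.

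The genuine gap is your third step, and flagging it as ``delicate'' does not repair it: the identity $\boldsymbol{K}_{P_0}=\sum_{\mathrm{odd}}\boldsymbol{\mathfrak{A}}_j$ is the entire nontrivial content of the proposition, and neither of your routes establishes it. Route (a), summing the Riemann-vanishing relations $\boldsymbol{\mathfrak{A}}_j=\mathcal{A}(D_j)-\boldsymbol{K}_{P_0}$ over the $g$ odd indices, fails for the reason you yourself give: the divisors $D_j$ are unknown, and the leftover term $g\boldsymbol{K}_{P_0}$ collapses to $\boldsymbol{K}_{P_0}$ or $0$ according to the parity of $g$, so nothing is pinned down. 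Route (b), ``sum the tabulated odd characteristics and check against the tabulated characteristic of $\boldsymbol{K}_{P_0}$,'' is circular: there is no independent tabulation of $\boldsymbol{K}_{P_0}$ to check against --- producing one is precisely what is being claimed. To close the argument you must actually compute the Riemann constant, either analytically, by evaluating the classical formula $K_j=\tfrac{1}{2}(1+\tau_{jj})-\sum_{l\neq j}\oint_{a_l}v_l\,\mathrm{d}v_j$ on the hyperelliptic cut system of Fig.~\ref{figure-1}, or divisor-theoretically, as in \cite{fk980} or Mumford's Tata Lectures on Theta II: one shows that for $\eta=\sum_{\mathrm{odd}}\boldsymbol{\mathfrak{A}}_j$ the translate $W_{g-1}+\eta$ lies inside the zero locus of $\theta$, using Riemann's vanishing and singularity theorems together with the explicit description of effective degree-$(g-1)$ divisors supported on Weierstrass points, and then invokes the fact that the theta divisor has trivial translation stabilizer to conclude $\eta=\boldsymbol{K}_{P_0}$. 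Without one of these inputs the formula is assumed, not proven. A smaller remark on your final step: the characteristics cannot literally ``recover the cycles,'' since all homology bases related by the level-two subgroup of $\mathrm{Sp}(2g,\mathbb{Z})$ produce the same characteristics; the proposition's phrase ``completely defined'' has to be read with the same informality, so I would not count this against you, but your claimed bijection between canonical bases and branch-point orderings is false as stated.
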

\begin{proposition}
Let the $2g+2$ characteristics $[\mathfrak{A}_{j}]$ be ordered into a sequence, for which the first $g$ characteristics are odd and the remaining are even. Then such a system of characteristics is a special fundamental system.
\end{proposition}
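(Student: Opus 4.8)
The plan is to reduce the combinatorial notion of azygeticity in (\ref{azygetic}) to a statement about the $\mathbb{F}_2$-valued Weil pairing on the 2-torsion of $\mathrm{Jac}(C)$, and then to exploit the standard combinatorial model of the branch-point half-periods $[\mathfrak{A}_j]$. The parity splitting demanded by the definition of a special fundamental system (first $g$ odd, the rest even) is \emph{given} by the ordering hypothesis together with the count in Proposition \ref{chardefhom}, so the only thing left to establish is that \emph{every} triple is azygetic.

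First I would simplify the azygetic exponent. For three characteristics $[\varepsilon_1],[\varepsilon_2],[\varepsilon_3]$, expanding the mixed product $(\varepsilon_1'+\varepsilon_2'+\varepsilon_3')(\varepsilon_1+\varepsilon_2+\varepsilon_3)^T=\sum_{i,j}\varepsilon_i'\varepsilon_j^T$ reproduces the three diagonal terms $\varepsilon_i'\varepsilon_i^T$ a second time; modulo $2$ these cancel against the three diagonal terms already present in (\ref{azygetic}), leaving $\sum_{i<j}(\varepsilon_i'\varepsilon_j^T+\varepsilon_j'\varepsilon_i^T)$. Writing $b([\alpha],[\beta]):=\alpha'\beta^T+\beta'\alpha^T \bmod 2$ for the $\mathbb{F}_2$-valued symplectic (Weil) pairing of the associated half-periods, azygeticity of a triple is \emph{equivalent} to
\[
b([\varepsilon_1],[\varepsilon_2])+b([\varepsilon_1],[\varepsilon_3])+b([\varepsilon_2],[\varepsilon_3])\equiv 1 \pmod 2.
\]

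Next I would compute $b$ on the branch-point characteristics. With base point $P_0=e_{j_0}$ the class $(e_k)-(e_{j_0})$ is $2$-torsion, since $2(e_k)-2(e_{j_0})=\mathrm{div}\bigl((x-e_k)/(x-e_{j_0})\bigr)$; hence $\mathfrak{A}_k$ corresponds to the even two-element set $\{k,j_0\}$, while $\mathfrak{A}_{j_0}=0$. Under the isomorphism $S\mapsto\eta_S$ between even subsets of $\{1,\ldots,2g+2\}$ modulo complementation and $\mathrm{Jac}(C)[2]$, the pairing is $b(\eta_S,\eta_T)=|S\cap T|\bmod 2$, a formula I would either cite (Mumford, Fay, or FK980) or re-derive from the mod-$2$ intersection numbers of the vanishing cycles read off Fig.~\ref{figure-1}. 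This gives $b(\mathfrak{A}_i,\mathfrak{A}_j)=|\{i,j_0\}\cap\{j,j_0\}|\equiv 1$ whenever $i,j,j_0$ are pairwise distinct, and $b(\mathfrak{A}_i,\mathfrak{A}_j)=0$ as soon as $j_0\in\{i,j\}$. One also checks the formula is well defined ($|S\cap T|\equiv|S^c\cap T|$ because $|T|$ is even), which is a consistency test worth recording.

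With the pairing in hand the conclusion follows from a two-case check on any triple $\{\mathfrak{A}_i,\mathfrak{A}_j,\mathfrak{A}_k\}$: if none of $i,j,k$ equals $j_0$, all three pairings equal $1$ and the sum is $3\equiv 1$; if one of the indices is $j_0$, exactly one pairing survives and the sum is again $1$ (two indices cannot equal $j_0$ since the branch points are distinct). Either way the triple is azygetic, so the ordered system is a special fundamental system. The main obstacle is the second step — pinning down $b(\eta_S,\eta_T)=|S\cap T|$ uniformly in $g$: the exponent reduction and the final case analysis are routine, but identifying $\mathrm{Jac}(C)[2]$ with even subsets-modulo-complement and verifying that the vanishing-cycle intersection matrix carries exactly the claimed mod-$2$ entries is where the genuine content sits, and it must be either carefully derived from the homology basis of Fig.~\ref{figure-1} or imported from the literature.
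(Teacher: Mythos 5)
Your proof is correct, and it takes a genuinely different route from the paper's. The paper splits the azygetic exponent in (\ref{azygetic}) into the sum of the three individual parities $\sum_i\varepsilon_i'\varepsilon_i^T$ plus the parity of the sum characteristic, and then asserts --- without further justification --- that a sum of three branch-point characteristics is odd exactly when zero or two of the summands are odd, and even otherwise. You instead cancel the diagonal terms and identify azygeticity with $b([\varepsilon_1],[\varepsilon_2])+b([\varepsilon_1],[\varepsilon_3])+b([\varepsilon_2],[\varepsilon_3])\equiv 1 \pmod 2$, which you then verify from the Weil-pairing formula $b(\eta_S,\eta_T)=|S\cap T|\bmod 2$ together with the identification $\mathfrak{A}_k\leftrightarrow\{k,j_0\}$. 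Note that the two pivotal facts are equivalent: by the polarization identity $q(x+y+z)=\sum_i q(x_i)+\sum_{i<j}b(x_i,x_j)$ for the parity form $q(x)=\varepsilon'\varepsilon^T$, the paper's claim about the parity of triple sums is exactly the statement $\sum_{i<j}b_{ij}\equiv1$ that you prove. So your argument supplies the justification that the paper's proof leaves implicit; as written, the paper's key step is precisely equivalent to the azygeticity being proven and rests tacitly on the hyperelliptic partition/parity theory. What each approach buys: the paper's is shorter and stays entirely within characteristic arithmetic; yours isolates the genuine content in a standard, citable lemma (Mumford, Fay, or derivable from the mod-$2$ intersection numbers of the homology basis of Fig.~\ref{figure-1}) and yields a cleaner case analysis --- by whether the base-point index lies in the triple --- that does not even require tracking the parities of the individual characteristics. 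The one caveat, which you flag yourself, is that the intersection formula for the pairing must indeed be imported or derived; with it in hand, your proof is complete.
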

\begin{proof}
In the light of Proposition \ref{chardefhom}, it is clear that there are $g$ odd and $g+2$ even characteristics. Now, the first part of the exponent of eq. (\ref{azygetic}) is $0\mod 2$ if none or two of the three characteristics are odd, and it equals $1\mod 2$ if one or three characteristics are odd. The second part of the exponent asks for the parity of a sum of three characteristics. If none or two of them are odd, the sum is odd and hence the said part of the exponent is $1\mod 2$. If one or three characteristics are odd, the sum is even and the second part of the exponent is $0\mod 2$. In total, the exponent is always odd and hence any triple is azygetic.
\end{proof}
Fay (\cite{fay973}, p. 13) describes a one-to-one correspondence between the characteristics $[\varepsilon]$ and the partitions of indices of branch points $\{1,\ldots,2g+2\}$ ,
\begin{align}
\mathcal{I}_{m}\cup \mathcal{J}_{m}=\{i_{1},\ldots,i_{g+1-2m}\}\cup\{j_{1},\ldots,j_{g+1+2m}\},
\end{align}
where $m$ is any integer between $0$ and $[\frac{g+1}{2}]$. Characteristics with given $m$ are defined by the vectors
\begin{align}
\sum_{k\in \mathcal{I}_m}\boldsymbol{\mathfrak{A}}_{i_{k}}-\boldsymbol{K}_{\infty}
=\frac12\boldsymbol{\varepsilon}_{m}+\frac12\tau\boldsymbol{\varepsilon}_{m}'.
\end{align}
Clearly, the following notation for characteristics is useful:
\begin{align}
\left[\varepsilon(\mathcal{I}_m)\right] = \left[\sum_{k\in \mathcal{I}_m}\boldsymbol{\mathfrak{A}}_{i_{k}}-\boldsymbol{K}_{\infty}\right]\equiv \left[\begin{array}{cc}  \varepsilon'_{m,1},\ldots,\varepsilon'_{m,g}\\ \varepsilon_{m,1},\ldots,\varepsilon_{m,g}  \end{array}   \right]\equiv \left\{\mathcal{I}_m\right\}, \quad m=0,1,\ldots\end{align}

$m$ is called the index of speciality of the branch point divisor and we will be interested in the cases $m=0$, that deals with even non-singular $\theta$-constants, and $m=1$, the case of non-singular odd $\theta$-constants. Here, we are considering hyperelliptic curves with a branch point at $\infty$ and we fix in what follows $P_0=\infty$. The defined sets can be written as
$$\mathcal{I}_0=\{ i_1,\ldots, i_g \},\ \mathcal{J}_0=\{ j_1,\ldots,j_{g+1}\}$$ along with the condition: 
$$\mathcal{I}_0\cap \mathcal{J}_0=\emptyset,\ \mathcal{I}_0\cup \mathcal{J}_0=\{1,2,\ldots, 2g+1\}.$$
From the set $\mathcal{I}_0$ $2g$ sets $\mathcal{I}_1$ and $\mathcal{J}_1$ can be defined:
\begin{align} 
\mathcal{I}_{1}^{(n)}=\mathcal{I}_0\backslash \{i_n\} ,\;  \mathcal{J}_{1}^{(n)}=\mathcal{J}_0\cup \{i_n\},\,\ 1\leq n\leq g.
\end{align}
It is convenient to denote the Vandermonde determinants,
\begin{align}
\Delta(\mathcal{I}_{m})=\prod_{i_{k}<i_{l}\in \mathcal{I}_{m}}(e_{i_{l}}-e_{i_{k}}),\ \ \Delta(\mathcal{J}_{m})=\prod_{j_{k}<j_{l}\in J_{m}}(e_{j_{l}}-e_{j_{k}})
\label{Vandermonde}
\end{align}
and we will write as a short form:
\begin{align*}
\nabla(\mathcal{I}_{0})=\Delta(\mathcal{I}_{0})\Delta(\mathcal{J}_{0}),\ \ \nabla(\mathcal{I}_{1})=\Delta(\mathcal{I}_{1})\Delta(\mathcal{J}_{1})
\end{align*}

\subsection{Thomae theorems} We are now in the position of having set up our notation. The odd curve $C$ will be realized as:
\begin{align*}
y^{2}=f(x),\ \ f(x)=(x-e_{1})\cdots(x-e_{2g+1}),\ e_{k}\in \mathbb{C}
\end{align*}
$C$ has a branch point at infinity, $e_{2g+2}=\infty$, and we agree to take away from the products in eq. (\ref{Vandermonde}) all factors containing $e_{2g+2}$.\\
The next theorem is one of the key points of \cite{tho870} and its proof is well-documented in the literature (e.g. \cite{fay973}):
\begin{theorem}[First Thomae theorem] Let $\mathcal{I}_{0}\cup \mathcal{J}_{0}$ be a partition of the set of indices of the finite branch points and $[\varepsilon(\mathcal{I}_{0})]$ the corresponding characteristic. Then
\begin{align}
\theta[\varepsilon(\mathcal{I}_{0})]=\epsilon\left(\frac{\det \mathcal{A}}{2^{g}\pi^{g}}\right)^{1/2}\nabla^{1/4}(\mathcal{I}_{0}),
\label{firstthomae}
\end{align}
where $\epsilon$ is the 8th root of unit, $\epsilon^{8}=1.$
\end{theorem}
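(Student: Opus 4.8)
The plan is to regard both sides of (\ref{firstthomae}) as functions of the finite branch points $e_1,\dots,e_{2g+1}$ and to show that their logarithmic differentials coincide, so that the quotient is a constant independent of moduli; that constant, together with the eighth root of unity $\epsilon$, is then pinned down by degeneration. Since passing to fourth powers clears both the branch cut of $\nabla^{1/4}$ and the square root of $\det\mathcal{A}$, I would prove the equivalent polynomial identity
\begin{equation}
(2\pi)^{2g}\,\theta[\varepsilon(\mathcal{I}_0)]^4 = \epsilon^{4}\,(\det\mathcal{A})^{2}\,\nabla(\mathcal{I}_0)
\end{equation}
and restore the fourth root, with its attendant phase, only at the very end.

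For the left-hand side the point is that the characteristic $[\varepsilon(\mathcal{I}_0)]$ is a fixed combinatorial datum, so $\theta[\varepsilon(\mathcal{I}_0)]$ feels the branch points only through the Riemann matrix $\tau$. I would differentiate by the chain rule and remove the $\tau$-derivatives with the heat equation $\partial_{\tau_{ij}}\theta=\tfrac{1}{2\pi\mathrm{i}(1+\delta_{ij})}\,\partial_{z_i}\partial_{z_j}\theta$, writing $\partial_{e_k}\log\theta[\varepsilon(\mathcal{I}_0)]$ as a numerical multiple of the contracted Hessian $\sum_{i,j}(\partial_{z_i}\partial_{z_j}\theta/\theta)|_{0}\,\partial_{e_k}\tau_{ij}$. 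Inserting Rauch's variational formula, which expresses $\partial_{e_k}\tau_{ij}$ through the leading coefficients of the normalised differentials $\mathrm{d}v_i$ at $e_k$ — in the local parameter $\xi=\sqrt{x-e_k}$ these coefficients are proportional to $(\mathcal{A}^{-1}\mathbf{e}_k)_i/\sqrt{f'(e_k)}$ with $\mathbf{e}_k=(1,e_k,\dots,e_k^{g-1})^{T}$ — turns the right-hand side into the Hessian of $\theta[\varepsilon(\mathcal{I}_0)]$ contracted twice against $\mathcal{A}^{-1}\mathbf{e}_k$.

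Differentiating the right-hand side is elementary: from $\partial_{e_k}\mathrm{d}u_i=\tfrac12(x-e_k)^{-1}\mathrm{d}u_i$ one reads off the variation of $\det\mathcal{A}$ as a combination of periods of second-kind differentials, while $\partial_{e_k}\log\nabla(\mathcal{I}_0)=\sum_{l\in S(k),\,l\ne k}(e_k-e_l)^{-1}$, where $S(k)$ is the block $\mathcal{I}_0$ or $\mathcal{J}_0$ containing $k$. The decisive step is to verify that the Hessian form of $\theta[\varepsilon(\mathcal{I}_0)]$ in the direction $\mathcal{A}^{-1}\mathbf{e}_k$ reproduces exactly this block sum together with the $\det\mathcal{A}$ term. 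This is where the choice of even characteristic genuinely enters, via the hyperelliptic relation between the values of the normalised differentials at the branch points and the Vandermonde data of the two blocks. I expect translating this analytic Hessian identity into the combinatorial partition sum to be the main obstacle.

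Finally, once the logarithmic derivatives agree in every $e_k$, the quotient of the two sides is constant on moduli space, and I would fix it by degeneration. Letting two branch points in the same block coalesce pinches a homology cycle; after dividing out the common vanishing factor the identity collapses to its genus-$(g-1)$ form, and iterating down to $g=1$ lands on the classical Jacobi value $K=\tfrac{\pi}{2}\theta_3^2(0)$. Tracking the phase through these limits then determines $\epsilon$ only up to the ambiguity $\epsilon^{8}=1$ recorded in the statement.
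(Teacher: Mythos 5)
The first thing to note is that the paper itself does not prove this theorem: it states it with the remark that the proof is well documented in the literature (citing \cite{fay973}), and only the Lemma \eqref{thetaquotlemma} and the Second Thomae theorem are proved there. So your proposal cannot be compared with an argument in the paper; it has to stand on its own. Your strategy --- compare logarithmic derivatives in the branch points using the heat equation and Rauch's variational formula, then fix the constant by degeneration --- is a legitimate and classically used route to Thomae-type formulas, and the peripheral computations you give are correct: the fourth-power normalization, $\partial_{e_k}\log\nabla(\mathcal{I}_0)=\sum_{l\in S(k),\,l\neq k}(e_k-e_l)^{-1}$, the local expansion of $\mathrm{d}v_i$ in $\xi=\sqrt{x-e_k}$, and the heat-equation coefficients. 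But there is a genuine gap exactly at the step you flag as ``the main obstacle''. After the chain rule, the heat equation and Rauch's formula, what remains to be proved is (up to the normalization constant in Rauch's formula)
\[
\frac{1}{f'(e_k)}\,\mathbf{e}_k^{T}\,\mathcal{A}^{-T}\,
\frac{\operatorname{Hess}\,\theta[\varepsilon(\mathcal{I}_0)](0)}{\theta[\varepsilon(\mathcal{I}_0)]}\,
\mathcal{A}^{-1}\mathbf{e}_k
\;=\;\frac{1}{2}\,\partial_{e_k}\log\det\mathcal{A}
\;+\;\frac{1}{4}\sum_{l\in S(k),\,l\neq k}\frac{1}{e_k-e_l},
\qquad \mathbf{e}_k=(1,e_k,\ldots,e_k^{g-1})^{T},
\]
and you give no argument for this. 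It is not a routine translation: the left side requires knowing the Hessian of $\log\theta$ at the even half-period $\varepsilon(\mathcal{I}_0)$ in terms of branch points (equivalently, the values of the hyperelliptic $\wp_{ij}$-functions at half-periods), and the right side requires expressing $\partial_{e_k}\log\det\mathcal{A}$ through periods of second-kind differentials and the Riemann bilinear relations. Both facts lie at essentially the same depth as Thomae's formula itself and are usually proved together with it in the Klein--Baker framework; deferring them leaves the argument a plan rather than a proof.

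The constant-fixing step also conceals real work. When two branch points of the same block coalesce, a nonseparating cycle pinches, and ``dividing out the common vanishing factor'' presupposes that one knows the precise vanishing rates of $\theta[\varepsilon(\mathcal{I}_0)]$ (which depend on how the characteristic sits relative to the vanishing cycle), of $\nabla(\mathcal{I}_0)$, and the simultaneous divergence of $\det\mathcal{A}$ --- matching these rates is part of what is being proven. One must also control the monodromy of the fourth and eighth roots, since $\mathcal{A}$, the homology basis and the characteristic all change under continuation in the configuration space of branch points. These points are fixable, but together with the missing Hessian identity they mean the proposal, as written, does not yet constitute a proof. A workable alternative core, closer in spirit to what the paper does for the \emph{Second} Thomae theorem, would be to exploit the Riemann vanishing theorem through identities of the type \eqref{thetaquotlemma}, evaluated at branch-point divisors; that algebraic input is precisely what your analytic Hessian step would ultimately have to reproduce.
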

To find $\epsilon$, which does not depend on $\tau$ but rather on the ordering of the branch points in $\nabla(\mathcal{I}_0)$, the classical way is to use a diagonal period matrix $\tau$ and use Jacobi's $\theta$-constants relation on the seperated equations. However, we believe the quickest way to determine $\epsilon$ is to compute the $\theta$-constants at a very low precision.\\
There are various corollaries of the Thomae formula (\ref{firstthomae}). The following two are easy to prove. Their formulation is taken from \cite{er08}, but the same result is also topic in \cite{tak996}.
\begin{corollary} \label{cor1} Let $\mathcal S=\{n_1,\ldots,n_{g-1}\}$ and
$\mathcal T=\{m_1,\ldots,m_{g-1}\}$ be two disjoint sets of
non-coinciding integers taken from the set ${\mathcal G}$ of
indices of the finite branch points. Then for any two $k\neq l$
from the set ${\mathcal G} \backslash ({\mathcal S} \cup {\mathcal
T})$ the following formula is valid
\begin{equation}
\frac{e_l-e_m}{e_k-e_m}=\epsilon \frac{\theta^2\{k,{\mathcal
S}\}\theta^2\{k,{\mathcal T}\}} {\theta^2\{l,{\mathcal
S}\}\theta^2\{l,{\mathcal T}\}}, \label{fractions}
\end{equation}
where $m$ is the remaining index when $\mathcal S$, $\mathcal T$,
$k, l$ are taken away from~${\mathcal G}$, and $\epsilon^4 = 1$.
\end{corollary}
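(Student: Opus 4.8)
The plan is to apply the first Thomae formula~(\ref{firstthomae}) to each of the four even $\theta$-constants in~(\ref{fractions}) and to track the cancellation of the resulting Vandermonde products. First I would fix the combinatorics of the index sets. Since $\mathcal{G}$ consists of the $2g+1$ indices of the finite branch points and $\mathcal{S},\mathcal{T}$ are disjoint sets of $g-1$ indices each, removing $\mathcal{S}$, $\mathcal{T}$, $k$ and $l$ from $\mathcal{G}$ leaves precisely one index $m$; thus $\mathcal{G}=\mathcal{S}\sqcup\mathcal{T}\sqcup\{k,l,m\}$ as a disjoint union. In particular each of $\{k\}\cup\mathcal{S}$, $\{k\}\cup\mathcal{T}$, $\{l\}\cup\mathcal{S}$, $\{l\}\cup\mathcal{T}$ has exactly $g$ elements and is therefore a legitimate set $\mathcal{I}_0$ in the sense of~(\ref{firstthomae}), with complementary set $\mathcal{J}_0$ of the form $\{l,m\}\cup\mathcal{T}$, $\{l,m\}\cup\mathcal{S}$, $\{k,m\}\cup\mathcal{T}$, $\{k,m\}\cup\mathcal{S}$ respectively; the symbols $\theta\{k,\mathcal{S}\}$, etc., denote the associated even non-singular $\theta$-constants.

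Next I would square~(\ref{firstthomae}), giving $\theta^{2}[\varepsilon(\mathcal{I}_0)]=\epsilon^{2}\,\frac{\det\mathcal{A}}{2^{g}\pi^{g}}\,\nabla^{1/2}(\mathcal{I}_0)$, and insert this into the right-hand side of~(\ref{fractions}). The analytic prefactor $\det\mathcal{A}/(2^{g}\pi^{g})$ occurs to equal powers in numerator and denominator and cancels identically, while the four factors $\epsilon^{2}$, each the square of an eighth root of unity and hence a fourth root of unity, combine into a single fourth root of unity. The $\theta$-ratio therefore equals a fourth root of unity times
\[
\left(\frac{\nabla(\{k,\mathcal{S}\})\,\nabla(\{k,\mathcal{T}\})}{\nabla(\{l,\mathcal{S}\})\,\nabla(\{l,\mathcal{T}\})}\right)^{1/2}.
\]

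The core of the argument is the purely algebraic evaluation of this Vandermonde ratio. For each set I would use $\nabla(\mathcal{I}_0)=\Delta(\mathcal{I}_0)\Delta(\mathcal{J}_0)$ together with the elementary factorization $\Delta(\{a\}\cup X)=\pm\,\Delta(X)\prod_{x\in X}(e_a-e_x)$ and its two-index analogue, so that each $\mathcal{J}_0$ of the form $\{l,m\}\cup\mathcal{T}$ contributes the extra factor $(e_l-e_m)$ and each of the form $\{k,m\}\cup\mathcal{S}$ the factor $(e_k-e_m)$. Carrying out the expansion, the internal Vandermondes $\Delta(\mathcal{S})$ and $\Delta(\mathcal{T})$ appear to identical powers on both sides, and each of the six cross-products $\prod_{x\in X}(e_a-e_x)$ with $a\in\{k,l,m\}$, $X\in\{\mathcal{S},\mathcal{T}\}$ occurs exactly once in the numerator and once in the denominator. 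After cancellation only $(e_l-e_m)^{2}$ from the numerator and $(e_k-e_m)^{2}$ from the denominator survive, so the bracketed quantity equals $\pm\,(e_l-e_m)/(e_k-e_m)$.

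The only real difficulty is the bookkeeping of this cancellation: one must check that the two internal Vandermondes and all six cross-products genuinely pair off, which is exactly where the hypothesis that $\{k,l,m\}$ is the complement of $\mathcal{S}\cup\mathcal{T}$ is used decisively. Finally, all sign ambiguities from the Vandermonde factorizations, together with the square root of the surviving overall sign, are fourth roots of unity and merge with the earlier fourth root into a single constant $\epsilon$ with $\epsilon^{4}=1$; solving the resulting identity for $(e_l-e_m)/(e_k-e_m)$ then yields~(\ref{fractions}).
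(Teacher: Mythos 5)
Your proposal is correct and is precisely the ``straightforward calculation and use of Thomae's first theorem'' that the paper invokes for this corollary (the paper gives no further detail): square the first Thomae formula for each of the four constants, cancel the $\det\mathcal{A}/(2^g\pi^g)$ prefactors, and track the Vandermonde cancellations. Your bookkeeping of the six cross-products and the surviving factors $(e_l-e_m)^2$ and $(e_k-e_m)^2$ is accurate, and the sign ambiguities are correctly absorbed into a fourth root of unity $\epsilon$.
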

\begin{corollary} \label{cor2}
Let ${\mathcal I}_0= \{i_1,\ldots,i_g\}$ and ${{\mathcal J}}_0 =
\{j_1,\ldots j_{g+1}\} $ be a partition of $\mathcal{G}$. Choose $k, n\in
{\mathcal I}_0$ and $i, j \in {\mathcal J}_0$ and define the sets
${\mathcal S}_{k}={\mathcal I}_0 \backslash \{k\}$, ${\mathcal
S}_{k,n}={\mathcal I}_0 \backslash \{k,n\}$, ${\mathcal T}_{i,j}
={\mathcal J}_0\backslash \{i,j\}$. Then
\begin{equation}
\frac{\prod\limits_{j_l\in{\mathcal
J}_0}(e_k-e_{j_l})}{\prod\limits_{i_l\in{\mathcal I}_0, i_l\neq k
}(e_k-e_{i_l})(e_k-e_n)^2} =\pm\frac{\theta^4\{i,{\mathcal
S}_{k}\}\theta^4\{j,{\mathcal S}_{k}\} \theta^4\{n,{\mathcal
T}_{i,j}\}}{  \theta^4\{i,j,{\mathcal S}_{k,n} \}
\theta^4\{i,{\mathcal T}_{i,j}\} \theta^4\{j,{\mathcal T}_{i,j}
\}}. \label{fractions1}
\end{equation}
\end{corollary}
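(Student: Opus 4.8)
The plan is to reduce everything to the first Thomae formula (\ref{firstthomae}) raised to the fourth power. First I would check that each of the six $\theta$-constants occurring in (\ref{fractions1}) is of the admissible form $\{\mathcal{I}_0\}$: the index sets $\{i,\mathcal{S}_k\},\ \{j,\mathcal{S}_k\},\ \{n,\mathcal{T}_{i,j}\},\ \{i,j,\mathcal{S}_{k,n}\},\ \{i,\mathcal{T}_{i,j}\},\ \{j,\mathcal{T}_{i,j}\}$ all have cardinality $g$ and hence define genuine partitions $\mathcal{I}_0\cup\mathcal{J}_0$ of $\mathcal{G}$. Applying (\ref{firstthomae}) to the fourth power of each constant produces the analytic prefactor $(\det\mathcal{A}/(2^g\pi^g))^2$ together with a factor $\nabla(\cdot)$. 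Since three constants sit in the numerator and three in the denominator of (\ref{fractions1}), the prefactor cancels identically, and the only surviving constant is the eighth root of unity, which after the fourth power is $\epsilon^4=\pm1$. This accounts for the sign in the statement and leaves a bare ratio of the six $\nabla$'s.

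Next I would linearise the Vandermonde products. Writing $\Delta(\mathcal{G})=\prod_{a<b\in\mathcal{G}}(e_a-e_b)$ for the full Vandermonde over the finite branch indices, the splitting of $\Delta(\mathcal{G})$ into within-$\mathcal{X}$, within-complement, and crossing pairs gives $\nabla(\mathcal{X})=\Delta(\mathcal{X})\Delta(\mathcal{G}\setminus\mathcal{X})=\pm\,\Delta(\mathcal{G})\,/\prod_{a\in\mathcal{X},\,b\notin\mathcal{X}}(e_a-e_b)$. Substituting this for each of the six sets, the common factor $\Delta(\mathcal{G})^3$ cancels between numerator and denominator, so the right-hand side of (\ref{fractions1}) reduces to $\pm$ a ratio of differences $(e_a-e_b)$ taken only over crossing pairs: the crossings of the three \emph{denominator} sets enter with a plus exponent, those of the three \emph{numerator} sets with a minus.

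The heart of the argument is then pure bookkeeping, which I would organise by the block decomposition $\mathcal{G}=\{k\}\sqcup\{n\}\sqcup(\mathcal{I}_0\setminus\{k,n\})\sqcup\{i\}\sqcup\{j\}\sqcup(\mathcal{J}_0\setminus\{i,j\})$. For each unordered pair of blocks I would record, across the six partitions, the net number of times the corresponding factors $(e_a-e_b)$ are separated. A short table shows that every pair not involving $\{k\}$ is separated equally often on the two sides and therefore cancels, while the factors meeting $k$ survive exactly as needed: $(e_k-e_{j_l})$ for $j_l\in\mathcal{J}_0$ with net exponent $+1$, $(e_k-e_{i_l})$ for $i_l\in\mathcal{I}_0\setminus\{k\}$ with net exponent $-1$, and, since $n$ already lies among the latter, the index $n$ acquires an extra $-2$ so that $(e_k-e_n)$ carries total exponent $-3$. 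This is precisely the left-hand side of (\ref{fractions1}).

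The only real obstacle is the finite but error-prone combinatorics of the last step; the reformulation via crossing pairs is what makes it tractable, since each factor's exponent is read off from six $0/1$ membership patterns. Alternatively, the identity can be assembled by composing instances of Corollary \ref{cor1}, moving a single index at a time between $\mathcal{I}_0$ and $\mathcal{J}_0$, but the direct crossing-pair count seems cleaner. Finally, as already for (\ref{firstthomae}), the overall sign is genuinely undetermined by this computation—it depends on the chosen orderings of the branch points inside the Vandermonde determinants—which is why the statement carries an unspecified $\pm$.
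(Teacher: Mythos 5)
Your proof is correct and takes exactly the route the paper intends: the paper gives no detailed argument for this corollary, saying only that one can verify it ``by a straightforward calculation and use of Thomae's first theorem,'' which is precisely the calculation you carry out. Your crossing-pair bookkeeping is sound and complete --- applying (\ref{firstthomae}) to the fourth power cancels the analytic prefactor and leaves $\epsilon^4=\pm1$, and the block decomposition indeed shows that every pair of blocks not involving $\{k\}$ is separated equally often by the three numerator and three denominator sets, while the pairs meeting $k$ survive with net exponents $+1$ (for $j_l\in\mathcal{J}_0$), $-1$ (for $i_l\in\mathcal{I}_0\setminus\{k\}$), and the extra $-2$ on $(e_k-e_n)$, matching the left-hand side of (\ref{fractions1}).
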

One can assure oneself of the correctness of these corollaries by a straightforward calculation and use of Thomaes first theorem.\\[0.5cm]
Thomae's paper contains also an important theorem describing non-singular derivated odd $\theta$-constants. It is this, which is most significant for the course of the paper at hand:
\begin{theorem}[Second Thomae theorem] Let $\mathcal{I}_{1}^{\left(n\right)}\cup \mathcal{J}_{1}^{\left(n\right)}$ be a partition of the set of indices of the finite branch points. Then
\begin{align}
\theta_j[\varepsilon(\mathcal{I}_{1}^{\left(n\right)})]=\epsilon\left(\frac{\det \mathcal{A}}{2^{g+2}\pi^{g}}\right)^{1/2}\nabla(\mathcal{I}_{1}^{\left(n\right)})^{1/4}\sum_{i=1}^{g}\mathcal{A}_{j,i}s_{g-i}(\mathcal{I}_{1}^{\left(n\right)}) ,\ j=1\ldots g, 
\label{secondthomae}
\end{align}
where $\epsilon^{8}=1$ and $s_{j}(\mathcal{I}_{1}^{\left(n\right)})$ is the elementary symmetric function of degree $j$ built in the branch points $e_{i}, i\in \mathcal{I}_{1}^{\left(n\right)},$ and alternated in the sign.
\label{secondthomaetheorem}
\end{theorem}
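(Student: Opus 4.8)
The plan is to determine the gradient vector $(\theta_1[\varepsilon(\mathcal{I}_1^{(n)})],\ldots,\theta_g[\varepsilon(\mathcal{I}_1^{(n)})])$ in two stages: first fix its \emph{direction} from the hyperelliptic geometry, which leaves a single unknown scalar, and then fix that scalar by comparison with the first Thomae formula (\ref{firstthomae}). The observation that drives the first stage is that the signed elementary symmetric functions assemble into a holomorphic differential,
\[
\sum_{i=1}^{g}s_{g-i}(\mathcal{I}_1^{(n)})\,\mathrm{d}u_i=\frac{\prod_{k\in\mathcal{I}_1^{(n)}}(x-e_k)}{y}\,\mathrm{d}x ,
\]
since $\prod_{k\in\mathcal{I}_1^{(n)}}(x-e_k)=\sum_{i=1}^g s_{g-i}(\mathcal{I}_1^{(n)})x^{i-1}$. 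Write $\delta=\varepsilon(\mathcal{I}_1^{(n)})$ for brevity.

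First I would identify the odd characteristic $[\delta]$ with the branch-point divisor $\mathcal{D}=\sum_{k\in\mathcal{I}_1^{(n)}}(e_k,0)$ of degree $g-1$, which is half-canonical because each $2(e_k,0)$ is cut out by $x=e_k$. The Riemann singularity theorem then shows that $[\delta]$ is non-singular and that the differential $\nu_\delta(P):=\sum_{j=1}^g\theta_j[\delta]\,\mathrm{d}v_j(P)$ is, up to a constant $C_\delta$, the unique holomorphic differential with double zeros along $\mathcal{D}$. On the hyperelliptic curve this is exactly the differential displayed above, so $\nu_\delta=C_\delta\sum_i s_{g-i}(\mathcal{I}_1^{(n)})\,\mathrm{d}u_i$. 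Re-expanding $\mathrm{d}u=\mathcal{A}\,\mathrm{d}v$ and comparing the coefficients of $\mathrm{d}v_j$ reproduces the claimed combination $\sum_{i}\mathcal{A}_{j,i}s_{g-i}(\mathcal{I}_1^{(n)})$ (up to the transpose convention for $\mathcal{A}$), and hence establishes (\ref{secondthomae}) up to the scalar $C_\delta$.

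The second stage, fixing $C_\delta$, is where the real work lies. I would use the prime-form factorisation $\theta[\delta]\big(\int_\infty^P\mathrm{d}v\big)=E(P,\infty)\,h_\delta(P)\,h_\delta(\infty)$, where the half-differential $h_\delta^2=\nu_\delta=C_\delta\,\frac{\prod_{k}(x-e_k)}{y}\mathrm{d}x$ is already known from the first stage. Evaluating at a finite branch point $P=(e_p,0)$ with $p\notin\mathcal{I}_1^{(n)}$, the left-hand side equals, up to an explicit exponential phase, the even $\theta$-constant $\theta[\varepsilon(\mathcal{I}_1^{(n)}\cup\{p\})]$, whose value is supplied by the first Thomae formula. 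Solving for $C_\delta$ then expresses it through $\det\mathcal{A}$, the explicit hyperelliptic values of $E((e_p,0),\infty)$ and $h_\delta$, and the Vandermonde factors hidden in $\nabla(\mathcal{I}_1^{(n)}\cup\{p\})^{1/4}$.

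The hard part will be the bookkeeping in this last step. One must track the local-coordinate normalisations at the branch point (writing $x-e_p=s^2$ produces factors of $2$) and at $\infty$ (where $x=t^{-2}$), the factor coming from the $2\imath\pi$ built into the definition (\ref{oddthetaconst}) of the derivated $\theta$-constants, and verify that all dependence on the auxiliary index $p$ cancels, leaving $\nabla(\mathcal{I}_1^{(n)})^{1/4}$ and the shift $2^{g}\to 2^{g+2}$ under the square root. Finally the $8$th root of unity $\epsilon$, which does not depend on $\tau$, is pinned down either by specialising to a diagonal period matrix or, as the authors suggest for the first formula, by a low-precision numerical evaluation.
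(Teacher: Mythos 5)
Your first stage is sound, and it is in fact a more conceptual route to the structural part of the formula than the one the paper takes: there, the proportionality of the gradient $(\theta_j[\delta])_{j}$ to $\mathcal{A}^T(s_{g-1},\ldots,s_1,1)^T$ emerges from differentiating the quotient $\theta[\mathfrak{A}_k](v-K_\infty)/\theta(v-K_\infty)$ and computing the Abel-map Jacobian $\partial x/\partial v$, whereas you obtain it directly from the Riemann singularity theorem together with the identification of $\sum_j\theta_j[\delta]\,\mathrm{d}v_j$ as the holomorphic differential with divisor $2\mathcal{D}$. Up to the transpose convention this reproduces the combination in eq. (\ref{secondthomae}), leaving only the scalar $C_\delta$.

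The genuine gap is in your second stage. The factorisation $\theta[\delta]\bigl(\int_\infty^P\mathrm{d}v\bigr)=E(P,\infty)\,h_\delta(P)\,h_\delta(\infty)$ is not an identity you can exploit as stated: it is the \emph{definition} of the prime form, and $h_\delta^2=\nu_\delta$ is exactly the object carrying your unknown $C_\delta$. Evaluating at $P=(e_p,0)$ therefore gives one equation in \emph{two} unknowns, $C_\delta$ and the number $E((e_p,0),\infty)$; the left-hand side is indeed supplied by the first Thomae formula, but there are no ``explicit hyperelliptic values of $E((e_p,0),\infty)$'' available as an independent, elementary input. Any evaluation of that prime-form value goes through $\theta[\delta']\bigl(\int_\infty^{(e_p,0)}\mathrm{d}v\bigr)=E((e_p,0),\infty)h_{\delta'}((e_p,0))h_{\delta'}(\infty)$ for some odd non-singular $\delta'$, i.e.\ through another constant $C_{\delta'}$ of precisely the kind you are trying to determine (the classical closed-form prime-form expressions for hyperelliptic curves are themselves Thomae-level results, so invoking them here is circular). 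What your scheme does yield is every ratio $C_\delta/C_{\delta'}$: divide the factorisations for two odd characteristics so that $E$ cancels, and both sides become known. But the common overall scalar, which carries exactly the factor $\bigl(\det\mathcal{A}/2^{g+2}\pi^{g}\bigr)^{1/2}$ claimed in the theorem, remains free. To pin it down you need an extra input --- for instance the Riemann--Jacobi derivative formula, or the paper's device: work from the outset with the quotient $\theta[\mathfrak{A}_k](v-K_\infty)/\theta(v-K_\infty)$, in which all half-differential/prime-form factors cancel, prove $\bigl(\theta[\mathfrak{A}_k](v-K_\infty)/\theta(v-K_\infty)\bigr)^2=\epsilon_4\prod_{j}(e_k-x_j)/\sqrt{f'(e_k)}$ via the Riemann vanishing theorem plus first Thomae, and then differentiate; there the absolute normalisation is delivered by the explicit Abel-map Jacobian $\partial x/\partial v=\mathcal{A}^T\bigl(y_is^{(i)}_{g-j}/F'(x_i)\bigr)$, and no prime form ever enters.
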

We give here an elementary proof of this theorem. For this we first examine a helpful lemma.
\begin{lemma} Let $\mathfrak{A}_{k}$ be the Abelian image of the branch point $e_{k}$ and $[\mathfrak{A}_{k}]$ its characteristic. Let $v=\sum_{j=1}^{g}\int_{\infty}^{P_{j}}\mathrm{d}v=\mathcal{A}^{-1}u$ with $P_{j}=(x_{j}, y_{j})$. Then at $k=1,\ldots,2g+1$
\begin{align}
\left(\frac{\theta[\mathfrak{A}_{k}](v-K_{\infty})}{\theta(v-K_{\infty})}\right)^{2}=\frac{\epsilon_{4}}{\sqrt{f'(e_{k})}}\prod_{j=1}^{g}(e_{k}-x_{j}),
\label{thetaquotlemma}
\end{align}
with $\epsilon_{4}^{4}=1.$
\end{lemma}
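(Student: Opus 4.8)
\emph{The plan is to} read the squared ratio on the left as a meromorphic function of one of the points and to pin it down by its divisor. Freeze $P_2,\dots,P_g$ in general position, write $\boldsymbol v(P)=\int_{\infty}^{P}\mathrm{d}v$ for the normalised Abel map based at $\infty$ (so that $v=\sum_{j}\boldsymbol v(P_j)$ and $\mathfrak A_k=\boldsymbol v((e_k,0))$ is the half-period of the $k$-th branch point), and regard
\begin{align*}
F(P)=\left(\frac{\theta[\mathfrak A_k](\boldsymbol v(P)+c-K_\infty)}{\theta(\boldsymbol v(P)+c-K_\infty)}\right)^{2},\qquad c=\sum_{j=2}^{g}\boldsymbol v(P_j),
\end{align*}
as a function of the single point $P=P_1=(x,y)$ on $C$. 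First I would check that $F$ is single-valued. The characteristic-independent exponential automorphy factor cancels between numerator and denominator already before squaring, and by the periodicity law (\ref{periodicity}) the remaining discrepancy under a lattice translation of the argument is only the sign $\mathrm e^{\mathrm i\pi(n^{T}\varepsilon_k'-n'^{T}\varepsilon_k)}=\pm1$; squaring kills it, so $F$ descends to a genuine meromorphic function on $C$.

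Next I would locate the zeros and poles of $F$ by Riemann's vanishing theorem together with the fact that the theta divisor is the image of the effective divisors of degree $g-1$. Writing $\sigma(x,y)=(x,-y)$ for the hyperelliptic involution (so $\boldsymbol v(\sigma Q)=-\boldsymbol v(Q)$ and $Q+\sigma Q$ is a fibre of $x$), the denominator vanishes precisely when $P+P_2+\cdots+P_g$ is special, i.e. at $P=\sigma P_2,\dots,\sigma P_g$, together with $P=\infty$, where the argument becomes the image of the degree-$(g-1)$ effective divisor $P_2+\cdots+P_g$. Shifting by the half-period through $\theta[\mathfrak A_k](z)\propto\theta(z+\mathfrak A_k)$, the same count applied to the degree-$g$ divisor $E=(e_k,0)+P_2+\cdots+P_g$ gives numerator zeros at $P=(e_k,0),\sigma P_2,\dots,\sigma P_g$. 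After squaring, the contributions of the $\sigma P_j$ cancel, leaving $\mathrm{div}(F)=2(e_k,0)-2\infty$. Since $e_k-x$ has exactly this divisor (double zero at the branch point $(e_k,0)$, double pole at $\infty$), we conclude $F=c_1\,(e_k-x)$ for a constant $c_1$ not depending on $P$.

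Because the left-hand side is symmetric in $P_1,\dots,P_g$ while I have shown it is linear in $(e_k-x_1)$ with the remaining points frozen, a standard separation-of-variables argument upgrades this to
\begin{align*}
\left(\frac{\theta[\mathfrak A_k](v-K_\infty)}{\theta(v-K_\infty)}\right)^{2}=C_k\prod_{j=1}^{g}(e_k-x_j),
\end{align*}
with $C_k$ a true constant depending only on $k$ and the branch points. It remains to evaluate $C_k$, and this is where the real work lies. I would fix it by degenerating the divisor $P_1+\cdots+P_g$ onto a branch-point divisor $\{(e_i,0):i\in\mathcal I_0\}$, so that $v-K_\infty$ becomes the half-period $[\varepsilon(\mathcal I_0)]$ and the two theta values collapse to the $\theta$-constants $\theta[\varepsilon(\mathcal I_0)]$ and $\theta[\varepsilon(\mathcal I_0)+\mathfrak A_k]$. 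Feeding both into the first Thomae formula (\ref{firstthomae}) converts the left-hand side into a ratio of $\nabla^{1/4}$-expressions, and comparison with $C_k\prod_{i\in\mathcal I_0}(e_k-e_i)$ isolates $C_k$.

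\emph{The main obstacle is this last step.} The factors $\det\mathcal A$ cancel in the ratio, but one must verify that the surviving product of Vandermonde factors collapses exactly to $f'(e_k)^{-1/2}=\prod_{l\neq k}(e_k-e_l)^{-1/2}$, and one must track the eighth roots of unity from the two applications of Thomae's formula and confirm that they combine into a single fourth root $\epsilon_4$. The delicate part is the characteristic arithmetic $[\varepsilon(\mathcal I_0)]+[\mathfrak A_k]=\{\mathcal I_0\,\triangle\,\{k\}\}$ together with the bookkeeping of the branch point at infinity; this must be carried out cleanly for the stated normalisation to emerge.
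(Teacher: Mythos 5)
Your proposal follows essentially the same route as the paper's own proof: the same squared theta quotient is treated as a function of one point, its divisor is pinned down by the Riemann vanishing theorem (yielding linearity in each $e_k-x_j$ after the common zeros cancel), and the constant is fixed by specializing the divisor to branch points and invoking the first Thomae formula. The only difference is that the final step you flag as ``the main obstacle'' is in fact a routine one-line computation, which the paper carries out directly: in the ratio $\nabla(\{k,i_1,\ldots,i_g\})/\nabla(\{i_1,\ldots,i_g\})$ the Vandermonde factors not involving $e_k$ cancel, and the remaining ones combine, up to sign, into $\prod_{j=1}^{g}(e_k-e_{i_j})^2/f'(e_k)$, whose square root gives exactly the stated normalisation with $\epsilon_4^4=1$.
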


\begin{proof} Consider the expression
\begin{align}
F(P_{1},\ldots,P_{g})=\frac{\theta^{2}\left(\int_{\infty}^{(e_{k},0)}\mathrm{d}v+v-K_{\infty}\right)}{\theta^{2}(v-K_{\infty})}
\end{align}
As the function of $\overline{P}_{1}$ (here $\overline{P}=(x,-y)$ whilst $P=(x,y)$) it has, according to the Riemann vanishing theorem, zeros of second order in the points $(e_{k},0)$, $P_{2},\ldots,P_{g}$ and poles of second order in $\infty, P_{2},\ldots,P_{g}$. Thus $F(P_{1},\ldots,P_{g})\sim c_{1}(x_{1}-e_{k})$. Considering in the same way other the variables $P_{2},\ldots,P_{g}$ we conclude
\begin{align}
\left(\frac{\theta[\mathfrak{A}_{k}](v-K_{\infty})}{\theta(v-K_{\infty})}\right)^{2}=c(e_{k}-x_{1})\cdots(e_{k}-x_{g})
\label{lemmawitha}
\end{align}
To find the constant $c$ we use (\ref{firstthomae}). Therefore we fix $v$ at some branch points $P_{i_1},\ldots,P_{i_g}$, $P_{i_j}=(e_{i_j},y(e_{i_j}))$, and rewrite eq. (\ref{lemmawitha}) using the $\varepsilon$-notation for the characteristics:
$$
\left(\frac{\theta[\mathfrak{A}_{k}](v-K_{\infty})}{\theta(v-K_{\infty})}\right)^{2}=\frac{\theta^2[\varepsilon_{ki_1\ldots i_g}]}{\theta^2[\varepsilon_{i_1\ldots i_g}]}=\epsilon_4\sqrt{\frac{\nabla(\{k,i_1,\ldots,i_g\})}{\nabla(\{i_1,\ldots,i_g\})}}=\frac{\epsilon_4}{\sqrt{f'(e_k)}}\cdot(e_{k}-e_{i_1})\cdots(e_{k}-e_{i_g}).
$$ 
\end{proof}
\begin{proof} Coming back to the proof of Theorem \ref{secondthomaetheorem} we introduce the functions
\begin{align}
\begin{split}
&F(x)=\prod_{k=1}^{g}(x-x_{k}),\\
&F_{i}(x)=F(x)/(x-x_{i})=x^{g-1}+s_{1}^{(i)}x^{g-2}+\ldots+s_{g-1}^{(i)},\ i=1,\ldots,g,
\end{split}
\end{align}
where $s_{j}^{(i)}$ are the elementary symmetric functions of order $j$ built in the elements $\{x_{1},\ldots,x_{g}\}/\{x_{i}\}$ and alternated in sign, namely,
\begin{align*}
\begin{split}
&s_{0}^{(i)}=1\\
&s_{1}^{(i)}=-x_{p}-\ldots,\ \ p\in\{1,\ldots,g\}/\{i\}\\
&s_{2}^{(i)}=x_{p}x_{q}+\ldots,\ \ p,q\in\{1,\ldots,g\}/\{i\}\\
&\vdots\\
&s_{g-1}^{(i)}=\pm\prod x_{r},\ \ r\in\{1,\ldots,g\}/\{i\}
\end{split}
\end{align*}
First, we use these functions to compute the Jacobian $\frac{\partial x}{\partial v}$. Differentiating the Abel map,
\begin{align*}
\begin{split}
\int_{\infty}^{x_1(u_1,\ldots,u_g)}\frac{\d x}{y}+&\ldots+\int_{\infty}^{x_g(u_1,\ldots,u_g)}\frac{\d x}{y}=u_1\\
&\vdots\\
\int_{\infty}^{x_1(u_1,\ldots,u_g)}\frac{x^{g-1}\d x}{y}+&\ldots+\int_{\infty}^{x_g(u_1,\ldots,u_g)}\frac{x^{g-1}\d x}{y}=u_g,
\end{split}
\end{align*}
with respect to $u_1$ we get
\begin{align}
\begin{split}
\frac1{y_1}\frac{\partial x_1}{\partial u_1}+&\ldots+\frac1{y_g}\frac{\partial x_g}{\partial u_1}=1\\
&\vdots\\
\frac{x_1^{g-1}}{y_1}\frac{\partial x_1}{\partial u_1}+&\ldots+\frac{x_g^{g-1}}{y_g}\frac{\partial x_g}{\partial u_1}=0
\end{split}
\end{align}
and similar for the other variables. Solving these equations with respect to $\frac{\partial x_i}{\partial u_j}$, we arrive at
\begin{align}
\frac{\partial x}{\partial v}=\frac{\partial\left(x_1,\ldots,x_g\right)}{\partial\left(v_1,\ldots,v_g\right)}=\mathcal{A}^T\frac{\partial\left(x_1,\ldots,x_g\right)}{\partial\left(u_1,\ldots,u_g\right)}=\mathcal{A}^T\left(\frac{y_is_{g-j}^{(i)}}{F'(x_i)}\right)_{i,j=1,\ldots,g}
\end{align}
Aside from that, we compute the derivative of eq. (\ref{thetaquotlemma}):
\begin{align}
\frac{\partial}{\partial v_i}\frac{\theta[\mathfrak{A}_{k}](v-K_{\infty})}{\theta(v-K_{\infty})}=\frac{\epsilon}{f'(e_{k})^{1/4}}\frac{1}{2\sqrt{\prod_{j=1}^{g}(e_{k}-x_{j})}}\frac{\partial}{\partial v_i}\prod_{j=1}^g\left(e_k-x_j\right),\quad i=1,\ldots,g
\end{align}
which can be processed for our purposes to:
\begin{align}
\frac{\partial}{\partial v_i}\prod_{j=1}^g\left(e_k-x_j\right)=-\sum_{j=1}^g\frac{\partial x_j}{\partial v_i}F_j(e_k)=-\left(\frac{\partial x_1}{\partial v_i},\ldots,\frac{\partial x_g}{\partial v_i}\right)\cdot\left(\begin{array}{c}F_1(e_k)\\ \vdots\\ F_g(e_k) \end{array}\right)
\label{Jacobgradient}
\end{align}
To write this relation for $\theta$-constants, we proceed like in the previous proof and fix $v$ at certain branch points: $x_j=e_l$, $j=1,\ldots,g$, $l=1,\ldots,2g+1$. Again we can adopt the $\varepsilon$-notation and write:
\begin{align}
\left(\begin{array}{c}
\frac{\partial}{\partial v_{1}}\\
\vdots\\
\frac{\partial}{\partial v_{g}}
\end{array}\right)\frac{\theta[\varepsilon_{k;l_1\ldots l_g}]}{\theta[\varepsilon_{l_1\ldots l_g}]}=\frac{\epsilon}{f'(e_{k})^{1/4}}\frac{1}{2\sqrt{\prod_{j=1}^{g}(e_{k}-e_{l_j})}}\mathcal{A}^T\left(\frac{y_is_{g-j}^{(i)}}{F'(e_{l_i})}\right)^T\cdot \left(\begin{array}{c}F_1(e_k)\\ \vdots\\ F_g(e_k) \end{array}\right).
\label{interimstep}
\end{align}
where the minus sign of eq. (\ref{Jacobgradient}) was absorbed in $\epsilon$.
Of course, the different $y_i=\sqrt{\prod_{j=1}^{2g+1}(x_i-e_j)}$ will become zero if $x_i=e_{l_i}$ and hence the whole expression cancels \emph{unless} $e_k$ coincides with with that specific $e_{l_i}$ so that these factors in the numerator and denominator can cancel. Without loss of generality we choose $k=l_g$ and hence we have $[\varepsilon_{k;i_1\ldots i_g}]=[\varepsilon_{i_1\ldots i_{g-1}}]$. These $g-1$ elements shall now constitute the set $\mathcal{I}_1$ for they form all the non-singular and odd characteristics. The characteristics from the left hand side's denominator, $[\varepsilon_{i_1\ldots i_{g}}]$, we merge into the set $\mathcal{I}_0$. 
$\theta_k[\varepsilon_{\mathcal{I}_1}]$ does not vanish, but $\theta_k[\varepsilon_{\mathcal{I}_0}]$ does. The derivative hence becomes:
\begin{align*}
\left(\begin{array}{c}
\frac{\partial}{\partial v_{1}}\\
\vdots\\
\frac{\partial}{\partial v_{g}}
\end{array}\right)\frac{\theta[\varepsilon_{l_1\ldots l_{g-1}}]}{\theta[\varepsilon_{l_1\ldots l_g}]}=\frac1{\theta[\varepsilon_{\mathcal{I}_0}]}\left(\begin{array}{c}\theta_1[\varepsilon_{\mathcal{I}_1}]\\ \vdots \\ \theta_g[\varepsilon_{\mathcal{I}_1}] \end{array}\right)
\end{align*}
However, on the right hand side of eq. (\ref{interimstep}) all remaining $y_i$ cancel and the residual zeros of $\sqrt{\prod_{j=1}^{g}(e_{k}-e_{l_j})}$ will be canceled by the factors of $F_i(e_k)$.\\
Plugging all this together, we get:
\begin{align}
\left(\begin{array}{c}
\theta_{1}[\varepsilon_{\mathcal{I}_1}]\\
\vdots\\
\theta_{g}[\varepsilon_{\mathcal{I}_1}]
\end{array}\right)=\epsilon\,\theta[\varepsilon_{\mathcal{I}_0}]\,\sqrt[4]{\chi_{k}}\,\mathcal{A}^{T}\left(\begin{array}{c}
s_{g-1}(\mathcal{I}_{1})\\
\vdots\\
 s_{1}(\mathcal{I}_{1})\\
1
\end{array}\right),
\end{align}
where it is denoted
\begin{align}
\chi_{k}=\frac{\prod_{j\in \mathcal{J}_{1},j\neq k}(e_{k}-e_{j})}{\prod_{i\in \mathcal{I}_{1}}(e_{k}-e_{i})},\ k=1,\ldots,2g+1,
\label{chidef}
\end{align}
with $\mathcal{J}_1$ the opposite partition of $\mathcal{I}_1$ as usual.\\
On $\theta[\varepsilon_{\mathcal{I}_0}]$ we can use Thomae's first theorem (\ref{firstthomae}). Recognizing, that $\nabla(\mathcal{I}_0)\cdot \chi_k=\nabla(\mathcal{I}_1)$, we arrive at the statement of the theorem.
\end{proof}
\begin{bf} Example: The genus-$1$ case\end{bf}\\
Let $C$ be the Weierstrass cubic,
\begin{align*}
y^{2}=4(x-e_{1})(x-e_{2})(x-e_{3}),\ \ e_{1}+e_{2}+e_{3}=0
\end{align*}
In this case we have: $\mathcal{I}_{1}^{\left(1\right)}=\emptyset$, $\mathcal{J}_{1}^{\left(1\right)}=\{1,2,3\}$,\\ $\Delta(\mathcal{I}_{1}^{\left(1\right)})=1$, $\Delta(\mathcal{J}_{1}^{\left(1\right)})=(e_{1}-e_{2})(e_{1}-e_{3})(e_{2}-e_{3})$,\\ $s_{0}(\mathcal{I}_{1}^{\left(1\right)})=1$ and $\varepsilon(\mathcal{I}_{1}^{\left(1\right)})=-K_{\infty}=[_{1}^{1}]$\\[0.5cm]
Using further \cite{be955}, vol 3, Sect 13.20 
\begin{align*}
(e_{1}-e_{2})^{1/2}=\frac{\pi}{2\omega}\vartheta_{4}^{2}(0),\ \ (e_{1}-e_{3})^{1/2}=\frac{\pi}{2\omega}\vartheta_{3}^{2}(0),\ \ (e_{2}-e_{3})^{1/2}=\frac{\pi}{2\omega}\vartheta_{2}^{2}(0),
\end{align*}
then (\ref{secondthomae}) takes the form of the Jacobi derivative relation
\begin{align}
\theta_{1}'(0)=\pi\vartheta_{2}(0)\vartheta_{3}(0)\vartheta_{4}(0).
\label{Jacobideriv}
\end{align}
\\[0.5cm]

\begin{bf}Example: The genus-$g$ case \end{bf}\\ 
With much the same method, one can obtain a generalization of eq. (\ref{Jacobideriv}) to arbitrary genus, which is known as the Riemann-Jacobi-formula (\cite{fay979}). For that to formulate we introduce the $g+1$ sets 
\begin{align}
\mathcal{T}_n=\mathcal{J}_0\backslash\{j_n\},\,\ 1\leq n\leq g+1,
\label{T-sets}
\end{align}
and also write $\mathcal{T}_0=\mathcal{J}_0$. The characteristics $[\varepsilon(\mathcal{I}_{1}^{(n)})]$ are non-singular and odd, the characteristics $[\varepsilon(\mathcal{T}_n)]$ non-singular and even.\\
Further, we need the Jacobi matrix $J$:
\begin{equation}
\label{jacobimatrix} 
J=\left.\frac{\partial(\theta[\varepsilon(\mathcal{I}_{1}^{(1)})](v),\ldots,\theta[\varepsilon(\mathcal{I}_{1}^{(n)})](v))}{\partial(v_{1},\ldots,v_{g})}\right|_{v=0}.
\end{equation}
Then, the following relation is valid:
\begin{equation}
\det J=\pm\pi^g\prod_{n=0}^{g+1}\theta[\varepsilon(\mathcal{T}_n)]
\end{equation}
This is a long-known result. See e.g. \cite{er08} for a proof within the methods described here. There, we find also a useful matrix notation for the second Thomae formula, which we will adopt in the next section:

\subsection{Matrix form of the Second Thomae formula}
The formula (\ref{secondthomae}) can be written in matrix form. With all the definitions above, one immediately recognises that this comes to:
\begin{align}
J^T=\epsilon\left(\frac{\mathrm{Det}\mathcal{A}}{2^{g+2}\pi^{g}}\right)^{1/2}
\mathcal{A}^{T}. \mathcal{S} .\mathcal{D},
\label{secondthomaematrix}
\end{align}
where  $\mathcal{S}$ is the invertable matrix
\begin{align}
\mathcal{S}=(s_{i}^{(j)})_{i,j=1,\ldots,g}, \label{smatrix}
\end{align}
and $\mathcal{D}$ is a diagonal matrix:
\begin{align*}
\mathcal{D}=\text{Diag} \left[\nabla(\mathcal{I}_{1}^{(1)})^{1/4},\ldots,\nabla(\mathcal{I}_{1}^{(g)})^{1/4}\right].
\end{align*}
This formula can also be rewritten as
\begin{align}
\mathcal{A}=\epsilon\sqrt{\frac{2^{g+2}\pi^{g}}{\det\mathcal{A}}}{\mathcal{S}^{-1}}^{T}.
\mathcal{D}^{-1}.J
\label{Thomaematrixstep1}
\end{align}
This can be treated further. To do that we write our given curve of genus $g$ (with a branch point at infinity) in the form:
\begin{equation} y^2= \phi(x) \psi(x)  \end{equation}
with 
\begin{equation}
\phi(x)=\prod_{i_k \in \mathcal{I}_0} (x-e_{i_k})  ,\quad  \psi(x)=\prod_{j_l \in \mathcal{J}_0} (x-e_{j_l}),
\end{equation}
After inverting eq. (\ref{Thomaematrixstep1}), we get:
\begin{align}
\mathcal{A}^{-1}=\epsilon\sqrt{\frac{\det\mathcal{A}}{2^{g+2}\pi^{g}}}\frac{\mathrm{Adj} J.\mathcal{D}.\mathcal{S}^T}{\det{J}}.
\end{align}
Next, we want to use the Riemann-Jacobi formula on $\det J$. For that we need the $g+1$ sets $\mathcal{T}_k=\mathcal{J}_0\backslash\{j_k\}$, so that we can write:
\begin{align}
\mathcal{A}^{-1}=\epsilon\sqrt{\frac{\det\mathcal{A}}{2^{g+2}\pi^{g}}}\frac{1}{\pi^g\theta[\varepsilon(\mathcal{T}_0)]\Theta_{\mathcal{I}_0}}\mathrm{Adj} J.\mathcal{D}.\mathcal{S}^T.
\end{align}
with
\begin{align}
\Theta_{\mathcal{I}_0}=\prod_{l=1}^{g+1}\theta[\varepsilon(\mathcal{T}_l)].
\label{Bigtheta0}
\end{align}
Note, that $\mathcal{T}_0$ is excluded from the product, which will be useful later, as well as the label of $\Theta$. We can process $\theta[\varepsilon(\mathcal{T}_0)]$ further with the help of the first Thomae theorem, so that most of the prefactors cancel and only a $\nabla^{1/4}(\mathcal{I}_0)$ remains in the denominator. As $\sqrt{\det \mathcal{A}}$ cancels, we get also rid of one possible source of a prefactor. Now using the previous found relation (\ref{chidef})
\begin{align}
 \frac{\Delta(\mathcal{I}_1^{(n)})}{\Delta(\mathcal{I}_0)} \; \frac{\Delta(\mathcal{J}_1^{(n)})}{\Delta(\mathcal{J}_0)} 
= \frac{\psi(e_{i_n})}{\phi'(e_{i_n})}=\chi_{i_n}, \quad n=1,\ldots,g  
\end{align}
and defining
\begin{align}
\mathcal{D}_1&=\mathrm{Diag}\left[\sqrt[4]{  \chi_{i_1} },  \ldots,   
\sqrt[4]{\chi_{i_g}}\right] 
\end{align}
we get the final form:
\begin{align}
\mathcal{A}^{-1}=\frac{\epsilon}{2\pi^g \Theta_{\mathcal{I}_0}}\mathrm{Adj} J.\mathcal{D}_1.\mathcal{S}^{T}.
\label{generalrosenhain}
\end{align}
Of course, one can consider also the not-inversed, original period matrix $\mathcal{A}$, and using the same steps on $\mathcal{D}$ as before, we can rewrite eq. (\ref{Thomaematrixstep1}) as:
\begin{align}
\mathcal{A}=\epsilon\frac{2}{\theta[\varepsilon(\mathcal{J}_0)]}{\mathcal{S}^{-1}}^{T}.\mathcal{D}_1^{-1}.J.
\label{notinverseA}
\end{align}
But we decided to work primarily on the inversed matrix, because this is, what Rosenhain's formula gives us. Another advantage is that we can quickly recover and generalize Bolza's formula. For that purpose we write our result in the following way:
\begin{proposition}
Let $\mathcal{C}$ be a hyperelliptic curve of genus $g$ with one branch point at infinity. 
Let $\mathcal{I}_0\cup \mathcal{J}_0 = \{ i_1,\ldots, i_g \} \cup \{ j_1,\ldots, j_{g+1}\}$ be a
partition of $2g+1$ indices of branch points. Then the columns $\boldsymbol{U}_m$ of the matrix $\mathcal{A}^{-1}$  are of the form
\begin{align}
\boldsymbol{U}_m=\frac{ 
\epsilon}{2\pi^g \Theta_{\mathcal{I}_0}} 
{\mathrm{Adj}(J)}
\left( \begin{array}{c} s^{i_1}_{g-m} \sqrt[4]{\chi_{i_1}} \\ \vdots \\ 
                                           s^{i_g}_{g-m}\sqrt[4]{\chi_{i_g}}    \end{array}   \right), 
\quad m=1, \ldots,g\label{finalanswer1}
\end{align}
\end{proposition}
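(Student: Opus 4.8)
The plan is to recognize that this Proposition is nothing more than a column-by-column reading of the already-established matrix identity (\ref{generalrosenhain}), so the entire task reduces to careful bookkeeping of the indices hidden in the compact matrix notation. First I would take (\ref{generalrosenhain}) as the starting point and use that matrix multiplication acts column-wise: the $m$-th column $\boldsymbol{U}_m$ of $\mathcal{A}^{-1}$ is obtained by applying the constant matrix $\frac{\epsilon}{2\pi^g\Theta_{\mathcal{I}_0}}\mathrm{Adj}(J)$ to the $m$-th column of the product $\mathcal{D}_1.\mathcal{S}^T$. Since the scalar prefactor and $\mathrm{Adj}(J)$ are independent of $m$, they may be pulled outside, which already matches the shape of (\ref{finalanswer1}).

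The substance of the proof is then to show that the $m$-th column of $\mathcal{D}_1.\mathcal{S}^T$ equals the vector $(s^{i_1}_{g-m}\sqrt[4]{\chi_{i_1}},\ldots,s^{i_g}_{g-m}\sqrt[4]{\chi_{i_g}})^T$. Because $\mathcal{D}_1=\mathrm{Diag}[\sqrt[4]{\chi_{i_1}},\ldots,\sqrt[4]{\chi_{i_g}}]$ is diagonal, its only effect is to multiply the $k$-th row of $\mathcal{S}^T$ by $\sqrt[4]{\chi_{i_k}}$, so the $k$-th entry of the desired column is $\sqrt[4]{\chi_{i_k}}\,(\mathcal{S}^T)_{k,m}=\sqrt[4]{\chi_{i_k}}\,(\mathcal{S})_{m,k}$. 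It remains to identify $(\mathcal{S})_{m,k}$ with $s^{i_k}_{g-m}$. For this I would return to the definition (\ref{smatrix}) of $\mathcal{S}$ and to the column vector $(s_{g-1}(\mathcal{I}_1),\ldots,s_1(\mathcal{I}_1),1)^T$ appearing in the proof of Theorem \ref{secondthomaetheorem}; the superscript $(k)$ labels the removed branch point $i_k$, so that the symmetric functions are those of $\mathcal{I}_1^{(k)}=\mathcal{I}_0\setminus\{i_k\}$, while the row index $m$ selects the symmetric function of degree $g-m$, reflecting the reversed ordering of degrees in that vector. Matching these two conventions yields $(\mathcal{S})_{m,k}=s_{g-m}(\mathcal{I}_1^{(k)})=s^{i_k}_{g-m}$, which completes the identification and hence establishes (\ref{finalanswer1}).

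The only genuine difficulty I anticipate is precisely this index-matching step: one must keep straight simultaneously the transpose in $\mathcal{S}^T$, the reversal $m\leftrightarrow g-m$ between the row index and the degree of the elementary symmetric function, and the relabeling from the positional index $k$ to the branch-point index $i_k\in\mathcal{I}_0$. Nothing here is conceptually deep, but a single slip in any of the three conventions would produce a column with the wrong entries. I would therefore verify the identification explicitly in the genus-$2$ case against (\ref{rosenhain2}) as a consistency check before declaring the general statement proved.
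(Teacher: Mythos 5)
Your proposal is correct and takes essentially the same route as the paper: the paper gives no separate proof of this Proposition, introducing it with the words ``we write our result in the following way'' as nothing more than a column-by-column reading of eq.~(\ref{generalrosenhain}), which is exactly your argument. Your index bookkeeping (the transpose of $\mathcal{S}$, the degree reversal $m\leftrightarrow g-m$ coming from the vector $(s_{g-1},\ldots,s_1,1)^T$ in the proof of Theorem~\ref{secondthomaetheorem}, and the relabeling $k\to i_k$) resolves the paper's notational sloppiness in eq.~(\ref{smatrix}) in the intended way, as your proposed genus-$2$ consistency check against eq.~(\ref{ainverseg2}) confirms.
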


\subsection{Bolza formulae}
Let $\partial_{\boldsymbol{U}_k}$ be the directional derivative along the vector $\boldsymbol{U}_k$ at zero argument:
\begin{align*}
\left.\partial_{\boldsymbol{U}_k} f(\boldsymbol{v})=\sum_{j=1}^gU_{k,j}\frac{\partial}{\partial v_j}f(\boldsymbol{v})\right\vert_{\boldsymbol{v}=0}\ \quad ,k=1,\ldots,g.
\end{align*}
For a genus-$2$ curve with branch points $e_1,\ldots,e_{2g+1}$, Bolza (\cite{bol887}) found (without proof!) that 
\begin{align}
e_i=-\frac{\partial_{\boldsymbol{U}_1}\theta[\boldsymbol{\mathfrak{A}}_i+\boldsymbol{K}_{P_0}]}{\partial_{\boldsymbol{U}_2}\theta[\boldsymbol{\mathfrak{A}}_i+\boldsymbol{K}_{P_0}]}.
\end{align}
With the help of eq. (\ref{finalanswer1}) we are in the position to highly generalize this result. As we have done before, we choose $P_0=\infty$ and keep the notation of $\varepsilon$ instead of $\boldsymbol{\mathfrak{A}}$. For a general hyperelliptic curve of genus $g$ we consider the expression
\begin{align*}
\frac{\partial_{\boldsymbol{U}_m}\theta[\varepsilon_{\mathcal{I}_1^{(j)}}]}{\partial_{\boldsymbol{U}_n}\theta[\varepsilon_{\mathcal{I}_1^{(j)}}]}, \quad m,n=1,\ldots,g
\end{align*}
There are $g$ different sets $\mathcal{I}_{1}^{(j)}=\mathcal{I}_0\backslash \{i_j\}$, which also constitute the matrices $J$ and $\mathcal{S}$. Inserting eq. (\ref{finalanswer1}) into this expression, we find that all $\theta$-constants cancel out, as well as the prefactors of $\boldsymbol{U}$ and the 4th. roots. We arrive at the 
\begin{corollary}
Let $\partial_{\boldsymbol{U}_k}$ be directional derivatives, $\mathcal{I}_{1}^{(j)}$ $g$ sets of $g-1$ branch point indices and $s_k(\mathcal{I}_{1}^{(j)})$ alternating elementary symmetric functions of order $k$ over the elements $e_i$, $i\in\mathcal{I}_{1}^{(j)}$. Then the following generalized Bolza formulae are valid:
\begin{align}
\frac{s_{g-m}(\mathcal{I}_{1}^{(j)})}{s_{g-n}(\mathcal{I}_{1}^{(j)})}=\frac{\partial_{\boldsymbol{U}_m}\theta[\varepsilon_{\mathcal{I}_1^{(j)}}]}{\partial_{\boldsymbol{U}_n}\theta[\varepsilon_{\mathcal{I}_1^{(j)}}]}, \quad m,n=1,\ldots,g
\end{align}
\end{corollary}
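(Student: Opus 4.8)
The plan is to reduce the claim to a single entry of a matrix product, insert the Proposition's formula (\ref{finalanswer1}) for the columns $\boldsymbol{U}_m$ of $\mathcal{A}^{-1}$, and then collapse everything via the adjugate identity $J\cdot\mathrm{Adj}(J)=\det(J)\,I_g$, after which the quotient is immediate. Throughout I keep $j$ as the index of the set $\mathcal{I}_1^{(j)}$ (as in the statement) and reserve $i$ for the inner summation over coordinates.

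First I would unfold the definition of the directional derivative. Writing $U_{m,i}=(\boldsymbol{U}_m)_i=(\mathcal{A}^{-1})_{i,m}$ and recalling $\partial\theta[\varepsilon_{\mathcal{I}_1^{(j)}}]/\partial v_i\big|_{v=0}=\theta_i[\varepsilon_{\mathcal{I}_1^{(j)}}]$, one has $\partial_{\boldsymbol{U}_m}\theta[\varepsilon_{\mathcal{I}_1^{(j)}}]=\sum_{i=1}^g U_{m,i}\,\theta_i[\varepsilon_{\mathcal{I}_1^{(j)}}]$. Since the gradient row $(\theta_1[\varepsilon_{\mathcal{I}_1^{(j)}}],\ldots,\theta_g[\varepsilon_{\mathcal{I}_1^{(j)}}])$ is exactly the $j$-th row of the Jacobi matrix $J$ of (\ref{jacobimatrix}), this sum is the single entry $(J\boldsymbol{U}_m)_j$ of the product of $J$ with the $m$-th column of $\mathcal{A}^{-1}$.

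Then I would substitute (\ref{finalanswer1}), i.e.\ $\boldsymbol{U}_m=\tfrac{\epsilon}{2\pi^g\Theta_{\mathcal{I}_0}}\mathrm{Adj}(J)\,\boldsymbol{w}_m$ with $(\boldsymbol{w}_m)_k=s_{g-m}(\mathcal{I}_1^{(k)})\sqrt[4]{\chi_{i_k}}$ (using $s^{i_k}_{g-m}=s_{g-m}(\mathcal{I}_1^{(k)})$), and invoke $J\,\mathrm{Adj}(J)=\det(J)\,I_g$. This gives
\[
\partial_{\boldsymbol{U}_m}\theta[\varepsilon_{\mathcal{I}_1^{(j)}}]=(J\boldsymbol{U}_m)_j=\frac{\epsilon\,\det J}{2\pi^g\Theta_{\mathcal{I}_0}}\,(\boldsymbol{w}_m)_j=\frac{\epsilon\,\det J}{2\pi^g\Theta_{\mathcal{I}_0}}\,s_{g-m}(\mathcal{I}_1^{(j)})\sqrt[4]{\chi_{i_j}}.
\]
Forming the ratio for two column indices $m,n$ at the \emph{same} set index $j$, the common scalar $\epsilon\det J/(2\pi^g\Theta_{\mathcal{I}_0})$ and the fourth root $\sqrt[4]{\chi_{i_j}}$ cancel, leaving precisely $s_{g-m}(\mathcal{I}_1^{(j)})/s_{g-n}(\mathcal{I}_1^{(j)})$, which is the assertion.

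This corollary is essentially immediate from the Proposition, so there is no deep obstacle; the one point demanding care is confirming that the prefactor which drops out is genuinely independent of the column index. In (\ref{finalanswer1}) the index $m$ enters only through the symmetric-function entries $s_{g-m}(\mathcal{I}_1^{(k)})$, while $\mathrm{Adj}(J)$, the scalar $\epsilon/(2\pi^g\Theta_{\mathcal{I}_0})$, and the roots $\sqrt[4]{\chi_{i_k}}$ are shared by all columns; after the adjugate collapse the surviving $j$-component carries only the fixed root $\sqrt[4]{\chi_{i_j}}$, which therefore also cancels in the quotient. For the statement to be meaningful one further needs $\det J\neq 0$ and $s_{g-n}(\mathcal{I}_1^{(j)})\neq 0$: the former follows from the Riemann--Jacobi formula $\det J=\pm\pi^g\prod_n\theta[\varepsilon(\mathcal{T}_n)]$ together with the non-vanishing of the even $\theta$-constants, and the latter is a genericity condition on the branch points. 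Beyond this bookkeeping I expect the argument to be routine.
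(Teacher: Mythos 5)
Your proposal is correct and takes essentially the same route as the paper: the paper likewise inserts eq.~(\ref{finalanswer1}) into the directional derivatives and observes that all $\theta$-constants, the prefactors of $\boldsymbol{U}$ and the fourth roots cancel. Your explicit reduction to the entry $(J\boldsymbol{U}_m)_j$ followed by the adjugate identity $J\,\mathrm{Adj}(J)=\det(J)\,I_g$ merely makes precise the cancellation the paper asserts in one line.
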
 
\begin{bf} Example: genus $3$\end{bf}\\
Take $\mathcal{I}_0=\{1,2,3\}$ and hence $\mathcal{I}_1^{(1)}=\{2,3\}$, $\mathcal{I}_1^{(2)}=\{1,3\}$ and $\mathcal{I}_1^{(3)}=\{1,2\}$. We find :
\begin{align}
\begin{split}
&\frac{\partial_{\boldsymbol{U}_1}\theta[\varepsilon_{12}]}{\partial_{\boldsymbol{U}_3}\theta[\varepsilon_{12}]}=\frac{s_2(e_1,e_2)}{s_0(e_1,e_2)}=e_1\cdot e_2\\
&\frac{\partial_{\boldsymbol{U}_2}\theta[\varepsilon_{12}]}{\partial_{\boldsymbol{U}_3}\theta[\varepsilon_{12}]}=\frac{s_1(e_1,e_2)}{s_0(e_1,e_2)}=-e_1-e_2,
\end{split}
\end{align}
and all other combinations of $m$ and $n$ can be derived from these both.

\section{A general $\theta$-constant form of $\mathcal{A}^{-1}$}
Though eq. (\ref{finalanswer1}) gives us a good tool, our final goal is to completely express $\mathcal{A}^{-1}$ with $\theta$-constants for those cases where only $\tau$ is known. Thus, we want to work more on $\chi_k$. We can achieve that by the use of eq. (\ref{thetaquotlemma}).
\begin{theorem}
Let $\mathcal{I}_0=\{n,i_1,\ldots,i_{g-1}\}$ and $\mathcal{J}_0=\{j_1,\ldots,j_{g+1}\}$ be a partition of branch points, such that $y^2=\phi(x)\psi(x)$ with 
\begin{align*}
\phi(x)=\prod_{i \in \mathcal{I}_0} (x-e_{i})  ,\quad  \psi(x)=\prod_{j \in \mathcal{J}_0} (x-e_{j}).
\end{align*}
Let further $\mathcal{I}_1=\mathcal{I}_0\setminus \{n\}$. 
For $\chi_n=\frac{\psi(e_n)}{\phi'(e_n)}$, $n\in\mathcal{I}_0$, we find:
\begin{align}
\sqrt[4]{\chi_n}=\epsilon\sqrt[g-1]{\frac{\Theta_{\mathcal{I}_1}}{\Theta_{\mathcal{I}_0}}}\cdot\sqrt[2g-2]{\prod_{i\in\mathcal{I}_0,i\neq n}\left(e_n-e_i\right)},
\end{align}
where 
\begin{align*}
\Theta_{\mathcal{I}_0}=\prod_{j\in\mathcal{J}_0}\theta[\varepsilon(\mathcal{J}_0\setminus\{j\})],\quad
\Theta_{\mathcal{I}_1}=\prod_{j\in\mathcal{J}_0}\theta[\varepsilon(\mathcal{I}_1\cup\{j\})]
\end{align*}
\label{generalchi}
\end{theorem}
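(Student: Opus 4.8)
\emph{Plan.} The quantities $\Theta_{\mathcal{I}_0}=\prod_{j\in\mathcal{J}_0}\theta[\varepsilon(\mathcal{J}_0\setminus\{j\})]$ and $\Theta_{\mathcal{I}_1}=\prod_{j\in\mathcal{J}_0}\theta[\varepsilon(\mathcal{I}_1\cup\{j\})]$ are products of $g$-element ($m=0$) even non-singular $\theta$-constants, so they are nonzero and all the ratios below are well defined. The plan is to produce $\chi_n$ as a ratio of such $\theta$-constants by specialising Lemma (\ref{thetaquotlemma}) at branch points, exactly as was done in the proof of that lemma and in the second Thomae proof. The only algebraic facts I will need downstream are $f'(e_n)=\phi'(e_n)\psi(e_n)$ (because $\phi(e_n)=0$ for $n\in\mathcal{I}_0$), the identity $\phi'(e_n)=\prod_{i\in\mathcal{I}_0,\,i\neq n}(e_n-e_i)=\prod_{i\in\mathcal{I}_1}(e_n-e_i)$, and $\prod_{j\in\mathcal{J}_0}(e_n-e_j)=\psi(e_n)$.

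\emph{Key specialisation.} I would take $k=n$ in (\ref{thetaquotlemma}) and fix the $g$ free points at the branch points indexed by $\mathcal{M}=\mathcal{J}_0\setminus\{j\}$ for a chosen $j\in\mathcal{J}_0$. As in the lemma's proof this turns the left-hand side into a ratio of $\theta$-constants, with numerator $\theta^2[\varepsilon(\{n\}\cup(\mathcal{J}_0\setminus\{j\}))]$. Since $\{n\}\cup(\mathcal{J}_0\setminus\{j\})$ has $g+1$ elements and its complement in $\{1,\dots,2g+1\}$ is precisely $\mathcal{I}_1\cup\{j\}$, the complementary relation $\theta[\varepsilon(\mathcal{S})]=\theta[\varepsilon(\mathcal{S}^{c})]$ (up to a root of unity) lets me replace this $\theta$-constant by $\theta[\varepsilon(\mathcal{I}_1\cup\{j\})]$. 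The right-hand side of (\ref{thetaquotlemma}) becomes $\frac{\epsilon_4}{\sqrt{f'(e_n)}}\prod_{i\in\mathcal{J}_0\setminus\{j\}}(e_n-e_i)$. Collecting this for every $j\in\mathcal{J}_0$ gives
\begin{equation}
\left(\frac{\theta[\varepsilon(\mathcal{I}_1\cup\{j\})]}{\theta[\varepsilon(\mathcal{J}_0\setminus\{j\})]}\right)^{2}=\frac{\epsilon_4}{\sqrt{f'(e_n)}}\,\frac{\psi(e_n)}{e_n-e_j},\qquad j\in\mathcal{J}_0 .
\end{equation}
The denominators $\theta[\varepsilon(\mathcal{J}_0\setminus\{j\})]$ already match the factors of $\Theta_{\mathcal{I}_0}$, and the numerators match those of $\Theta_{\mathcal{I}_1}$, so no reindexing is needed.

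\emph{Taking the product and solving.} Multiplying the $g+1$ relations over $j\in\mathcal{J}_0$ and using $\prod_{j\in\mathcal{J}_0}(e_n-e_j)=\psi(e_n)$, I obtain
\begin{equation}
\left(\frac{\Theta_{\mathcal{I}_1}}{\Theta_{\mathcal{I}_0}}\right)^{2}=\frac{\epsilon_4^{\,g+1}}{f'(e_n)^{(g+1)/2}}\,\psi(e_n)^{g}
=\epsilon\,\frac{\chi_n^{(g-1)/2}}{\phi'(e_n)},
\end{equation}
where the second equality substitutes $f'(e_n)=\phi'(e_n)\psi(e_n)$ together with $\psi(e_n)=\chi_n\phi'(e_n)$, so that $\psi(e_n)^g/f'(e_n)^{(g+1)/2}=\chi_n^{(g-1)/2}/\phi'(e_n)$. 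Solving for $\chi_n^{(g-1)/2}$ and then raising to the power $1/(2g-2)$ isolates $\sqrt[4]{\chi_n}$, and since $2/(2g-2)=1/(g-1)$ and $\phi'(e_n)=\prod_{i\in\mathcal{I}_0,\,i\neq n}(e_n-e_i)$ this is exactly the asserted identity.

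\emph{Main obstacle.} The computation itself is routine; the delicate part is the bookkeeping of the unimodular constants and of the branches of the various radicals ($\sqrt{f'(e_n)}$, the squares, and finally the $(g-1)$-th and $(2g-2)$-th roots), so that the accumulated ambiguity collapses to a single root of unity $\epsilon$ as claimed. I also need to justify carefully the complementary identification $\theta[\varepsilon(\{n\}\cup(\mathcal{J}_0\setminus\{j\}))]=\theta[\varepsilon(\mathcal{I}_1\cup\{j\})]$ and the non-vanishing (non-singularity) of every $g$-element even $\theta$-constant appearing in $\Theta_{\mathcal{I}_0}$ and $\Theta_{\mathcal{I}_1}$, which is what guarantees the intermediate ratios are not of the form $0/0$.
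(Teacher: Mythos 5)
Your proposal is correct and takes essentially the same route as the paper's own proof: both specialize Lemma (\ref{thetaquotlemma}) at $k=n$ with the $g$ free points placed at the branch points $\mathcal{J}_0\setminus\{j\}$, identify the numerator characteristic with $\varepsilon(\mathcal{I}_1\cup\{j\})$ by complementation, multiply the resulting $g+1$ relations over $j\in\mathcal{J}_0$, recognize $\chi_n$, and extract roots with all sign ambiguities absorbed into $\epsilon$. The only cosmetic difference is that the paper first squares to fourth powers and counts linear factors, whereas you keep $\sqrt{f'(e_n)}$ and substitute $f'(e_n)=\phi'(e_n)\psi(e_n)$ algebraically.
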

\begin{proof}
Take eq. (\ref{thetaquotlemma}) and evaluate $v$ at the branch points $e_{j_1},\ldots,e_{j_g}$:
\begin{align*}
\begin{split}
\frac{\theta^2[\varepsilon_{nj_1\ldots j_g}]}{\theta^2[\varepsilon_{j_1\ldots j_g}]}&=\frac{\epsilon_4}{f'(e_n)}\prod_{l=1}^g\left(e_n-e_{j_l}\right)\\
&=\epsilon_4\sqrt{\frac{(e_n-e_{j_1})\cdots(e_n-e_{j_g})}{(e_n-e_{i_1})\cdots(e_n-e_{i_{g-1}})\cdot(e_n-e_{j_{g+1}})}}.
\end{split}
\end{align*}
Squaring this and iterating the procedure for every left-over $j_{g+1}$ we get:
\begin{align}
\begin{split}
\prod_{j\in\mathcal{J}_0}\frac{\theta^4[\varepsilon_{\{n\}\cup\mathcal{J}_0\setminus\{j\}}]}{\theta^4[\varepsilon_{\mathcal{J}_0\setminus\{j\}}]}\equiv\frac{\Theta^4_{\mathcal{I}_1}}{\Theta^4_{\mathcal{J}_0}}&=\pm\frac{\prod_{j\in\mathcal{J}_0}\left(e_n-e_j\right)^{g-1}}{\prod_{i\in\mathcal{I}_0,i\neq n}\left(e_n-e_i\right)^{g+1}}\\&=\pm\frac{\chi_n^{g-1}}{\prod_{i\in\mathcal{I}_0,i\neq n}\left(e_n-e_i\right)^{2}}.
\end{split}
\label{chiproof}
\end{align}
This equality comes from the fact that there are $g$ times $g+1$ terms in the numerator and every linear factor occurs $g$ times, but is canceled once by the denominator. The residual parts fit the definition of $\chi_n$.\\ 
Finally, we recognize $\varepsilon(\{n\}\cup\mathcal{J}_0\setminus\{j\})=\varepsilon(\{n\}\cup\mathcal{I}_0\cup\{j\})=\varepsilon(\mathcal{I}_1\cup\{j\})$ to arrive at the definition of $\Theta_{\mathcal{I}_1}$. 
\end{proof}
\begin{bf} Note: \end{bf} In eq. (\ref{chiproof}) are as much factors in the numerator as in the denominator. Therefore, we can interchange the ordering of the $e_n$ and $e_j$ without changing the global prefactor $\epsilon$, if one simultaneously changes the ordering of the $e_n$ and $e_i$ in the denominator. 

\section{Genus 2: Recovery of Rosenhain's formula}
\subsection{The Rosenhain derivatives}
Consider the case $g = 2$ and the curve given as
\begin{equation}
y^2=(x-e_1)(x-e_2)(x-e_3)(x-e_4)(x-e_5)\equiv f(x) \label{curve2}
\end{equation}

In the homology basis drawn on Fig. \ref{figure-1} we have
\begin{align}
\begin{split}
&[\boldsymbol{\mathfrak{A}}_1]=\left[ \begin{array}{cc}1&0\\0&0\end{array} \right],
\quad[\boldsymbol{\mathfrak{A}}_2]=\left[ \begin{array}{cc}1&0\\1&0\end{array} \right],
\quad[\boldsymbol{\mathfrak{A}}_3]=\left[ \begin{array}{cc}0&1\\1&0\end{array} \right],\\
&[\boldsymbol{\mathfrak{A}}_4]=\left[ \begin{array}{cc}0&1\\1&1\end{array} \right],
\quad[\boldsymbol{\mathfrak{A}}_5]=\left[ \begin{array}{cc}0&0\\1&1\end{array} \right],
\quad[\boldsymbol{\mathfrak{A}}_6]=\left[ \begin{array}{cc}0&0\\0&0\end{array} \right]
\end{split}\label{sfs2}
\end{align}
The characteristic of the vector of Riemann constants reads
\begin{equation}
[\boldsymbol{K}_{\infty}] =[\boldsymbol{\mathfrak{A}}_2]+ [\boldsymbol{\mathfrak{A}}_4] \equiv [\boldsymbol{\mathfrak{A}}_1]+ [\boldsymbol{\mathfrak{A}}_3] +[\boldsymbol{\mathfrak{A}}_5] = \left[\begin{array}{cc}
1 & 1\\
0 & 1
\end{array}\right]
\label{Kinf}
\end{equation}

The characteristics in question here are: 
 \begin{align}\begin{split}{}
[ \varepsilon_i] = \left[\boldsymbol{\mathfrak{A}}_i\right] - \left[\boldsymbol{K}_{\infty}  \right], \quad &i=1,\ldots, 6\\
[\varepsilon_{ij}]= \left[\boldsymbol{\mathfrak{A}}_i\right] +\left[\boldsymbol{\mathfrak{A}}_j\right] - \left[\boldsymbol{K}_{\infty}  \right], \quad &i,j=1,\ldots,6, \, i\neq j,
\end{split}
\end{align}
and analogously for three indices, if necessary. The first line represents the $6$ odd characteristics, the second line the $10$ even characteristics. Due to the addition$\mod 2$ one easily sees that $[\varepsilon_2]=[\boldsymbol{\mathfrak{A}}_4]$, $[\varepsilon_4]=[\boldsymbol{\mathfrak{A}}_2]$ and $[\varepsilon_{24}]=2\cdot[\boldsymbol{\mathfrak{A}}_2]+2\cdot[\boldsymbol{\mathfrak{A}}_4]=[_{00}^{00}]=[\boldsymbol{\mathfrak{A}}_6]$\\
One also has to hold in mind, that the sum of all characteristics $\mathfrak{A}_i$ is zero, so that $2$-indexed $\varepsilon$ and $3$-indexed $\varepsilon$ can be interchanged (as shown for instance in eq. (\ref{Kinf})).\\

We are now in the position to exemplary investigate the sets $\mathcal{T}_l$ of eq. (\ref{T-sets}) and henceforward $\Theta_{\mathcal{I}_0}$ of eq. (\ref{Bigtheta0}). We therefore split $f(x)=\phi(x)\psi(x)$ like before and specify $\phi$ and $\psi$ by fixing $\mathcal{I}_0=\{1,2\}$ and $\mathcal{J}_0=\{3,4,5\}$, so that:
$$\mathcal{T}_1=\{3,4\},\ \ \mathcal{T}_2=\{3,5\},\ \ \mathcal{T}_3=\{4,5\}.$$
The already defined quantity $\Theta_{\mathcal{I}_0}$ becomes:
\begin{align}
\Theta_{\{1,2\}}=\theta[\varepsilon_{34}]\theta[\varepsilon_{35}]\theta[\varepsilon_{45}].
\end{align}
This choice of the sets leads us directly to the following Rosenhain derivative formula as a consequence of the Riemann-Jacobi-formula:
\begin{equation}
\theta_1 [\varepsilon_2 ]\theta_2[\varepsilon_1]-
\theta_1 [\varepsilon_1 ]\theta_2[\varepsilon_2]
=\pi^2\theta [\varepsilon_{34}]\theta [\varepsilon_{35}]\theta [\varepsilon_{45}] \theta [\varepsilon_{345}]\equiv \pi^2\Theta_{\{1,2\}} \theta [\varepsilon_{345}]
\label{Rosder}
\end{equation}

In general, for the different choices of $\varepsilon_i$, $\varepsilon_j$ as odd characteristics Riemann-Jacobi gives us $\binom{2g+1}{g}=\binom{5}{2}=10$ different Rosenhain derivative formulae (up to a minus sign due to the antisymmetry of the determinant), and $5$ more, if one includes $\varepsilon_6\equiv K_{\infty}$. These last $5$ equations belong to the $5$ possible sets $\mathcal{I}_0=\{i,6\}$, which are not covered by our notation, though they are valid anyway. All these $15$ relations are shown in the Appendix $A$ with their correct ordering to fix the sign.\\
For any triple $\{i,j,k\}\subset\{1,\ldots,6\}$ we can regard the three Rosenhain derivative formulae belonging to the sets $\mathcal{I}_0^1=\{i,j\}$, $\mathcal{I}_0^2=\{i,k\}$ and $\mathcal{I}_0^3=\{j,k\}$. Among the even characteristics on the right-hand-side of them there will be precisely one characteristic $\varepsilon_{lmp}$, $\{l,m,p\}=\{1,\ldots,6\}\setminus\{i,j,k\}$, which appears in all three formulae. We therefore write:
\begin{align}
\begin{split}
&\theta_1[\varepsilon_i]\theta_2[\varepsilon_j]-\theta_1[\varepsilon_j]\theta_2[\varepsilon_i]=\pi^2\theta[\varepsilon_{lmp}]\Theta_{\{i,j\}}\\
&\theta_1[\varepsilon_j]\theta_2[\varepsilon_k]-\theta_1[\varepsilon_k]\theta_2[\varepsilon_j]=\pi^2\theta[\varepsilon_{lmp}]\Theta_{\{j,k\}}\\
&\theta_1[\varepsilon_i]\theta_2[\varepsilon_k]-\theta_1[\varepsilon_k]\theta_2[\varepsilon_i]=\pi^2\theta[\varepsilon_{lmp}]\Theta_{\{i,k\}}\\
\end{split}
\label{3of6}
\end{align}
Here, $\Theta_{\{i,j\}}=\theta[\varepsilon_{klp}]\theta[\varepsilon_{kmp}]\theta[\varepsilon_{klm}]$, as it is apparent from the construction. $3$-indexed $\varepsilon$ can be changed to $2$-indexed $\varepsilon$ if convenient. Each two of eq. (\ref{3of6}) can be used to solve for $\theta_n[\varepsilon_i]$, $\theta_n[\varepsilon_j]$ or $\theta_n[\varepsilon_k]$, $n=1,2$, and the third one provides a useful substitution. In the course, $\varepsilon_{lmp}$ cancels and we arrive at the following lemma:
\begin{lemma}
For any odd genus-$2$ curve $C$ and one from $20$ triples $\{i,j,k\}\subset\{1,\ldots,6\}$ (with $\varepsilon_6\equiv K_{\infty}$) the following relation holds:
\begin{align}
\theta_n[\varepsilon_i]\Theta_{\{j,k\}}\pm\theta_n[\varepsilon_j]\Theta_{\{i,k\}}=\theta_n[\varepsilon_k]\Theta_{\{i,j\}},
\label{triplerel}
\end{align}
with $n=1,2$ and $\Theta_{\{i,j\}}=\theta[\varepsilon_{klp}]\theta[\varepsilon_{kmp}]\theta[\varepsilon_{klm}]$ and analogously. The characteristics in $\Theta_{\{i,j\}}$ sum up to $\varepsilon_k$.
\label{triplerel2}
\end{lemma}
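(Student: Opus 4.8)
The plan is to read the three relations in (\ref{3of6}) as identities among the $2\times 2$ minors of three planar vectors, and then to exploit the fact that any three vectors in the plane are linearly dependent. For the fixed triple $\{i,j,k\}$ I would collect the derivative $\theta$-constants into the rows of the matrix
\[
M=\left(\begin{array}{cc}\theta_1[\varepsilon_i] & \theta_2[\varepsilon_i]\\ \theta_1[\varepsilon_j] & \theta_2[\varepsilon_j]\\ \theta_1[\varepsilon_k] & \theta_2[\varepsilon_k]\end{array}\right),
\]
and abbreviate the prefactor common to all three lines of (\ref{3of6}) by $W=\pi^2\theta[\varepsilon_{lmp}]$. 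With this notation the three lines of (\ref{3of6}) state precisely that the $2\times 2$ minors of $M$ obtained by deleting its third, first and second rows equal $W\,\Theta_{\{i,j\}}$, $W\,\Theta_{\{j,k\}}$ and $W\,\Theta_{\{i,k\}}$ respectively.

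The key step is then to append to $M$ a third column equal to its $n$-th column and to observe that the resulting determinant vanishes, having two equal columns:
\[
\det\left(\begin{array}{ccc}\theta_1[\varepsilon_i] & \theta_2[\varepsilon_i] & \theta_n[\varepsilon_i]\\ \theta_1[\varepsilon_j] & \theta_2[\varepsilon_j] & \theta_n[\varepsilon_j]\\ \theta_1[\varepsilon_k] & \theta_2[\varepsilon_k] & \theta_n[\varepsilon_k]\end{array}\right)=0,\qquad n=1,2.
\]
Expanding along the appended column expresses this determinant through the three minors above, and substituting (\ref{3of6}) factors out the common $W$:
\[
W\left(\theta_n[\varepsilon_i]\,\Theta_{\{j,k\}}-\theta_n[\varepsilon_j]\,\Theta_{\{i,k\}}+\theta_n[\varepsilon_k]\,\Theta_{\{i,j\}}\right)=0.
\]

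Since $[\varepsilon_{lmp}]$ is a non-singular even characteristic, $\theta[\varepsilon_{lmp}]\neq 0$ and hence $W\neq 0$; dividing it out gives
\[
\theta_n[\varepsilon_i]\,\Theta_{\{j,k\}}-\theta_n[\varepsilon_j]\,\Theta_{\{i,k\}}=-\theta_n[\varepsilon_k]\,\Theta_{\{i,j\}},
\]
which is (\ref{triplerel}) once the $\pm$ is used to absorb the sign ambiguity carried by the individual $\theta$-constants that enter the products $\Theta$. It then remains to verify the combinatorial assertion that the three even characteristics in $\Theta_{\{i,j\}}=\theta[\varepsilon_{klp}]\theta[\varepsilon_{kmp}]\theta[\varepsilon_{klm}]$ sum to $\varepsilon_k$: adding the index multisets $\{k,l,p\}$, $\{k,m,p\}$, $\{k,l,m\}$ modulo $2$ leaves $k$ with multiplicity three (one mod $2$) and $l,m,p$ each with multiplicity two (zero mod $2$), while the three occurrences of $-\boldsymbol{K}_\infty$ add to $-\boldsymbol{K}_\infty$ mod $2$, so the sum is $\boldsymbol{\mathfrak{A}}_k-\boldsymbol{K}_\infty=\varepsilon_k$. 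The same computation holds verbatim for each of the $\binom{6}{3}=20$ triples, including those containing the index $6$ with $\varepsilon_6\equiv\boldsymbol{K}_\infty$, since the supplementary derivative relations for the sets $\mathcal{I}_0=\{i,6\}$ were already observed to hold and to have the same structure.

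The analytic content is therefore entirely contained in the elementary planar identity above; the one step I would treat with care is the bookkeeping, namely confirming that the \emph{same} characteristic $[\varepsilon_{lmp}]$ — and hence the same factor $W$ — really occurs in all three lines of (\ref{3of6}), so that it cancels cleanly, and checking that the residual global sign matches one of the two options in (\ref{triplerel}). Both points are combinatorial rather than analytic and follow from the construction preceding (\ref{3of6}) together with the fact that $\{l,m,p\}$ is the fixed complement of $\{i,j,k\}$ in $\{1,\ldots,6\}$.
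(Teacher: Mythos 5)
Your proposal is correct and takes essentially the same route as the paper: both arguments start from the three Rosenhain derivative formulae (\ref{3of6}), which share the common factor $\pi^2\theta[\varepsilon_{lmp}]$, and eliminate that factor by elementary linear algebra --- the paper by solving two of the equations and substituting the third, you by the equivalent device of a vanishing $3\times 3$ determinant with a repeated column. The remaining points you flag (the sign ambiguity absorbed into the $\pm$, and the check that the characteristics in $\Theta_{\{i,j\}}$ sum to $\varepsilon_k$) are handled the same way in the paper, so there is no substantive difference.
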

For the choice above, $\mathcal{I}_0=\{1,2\}$, we deliberately pick as the third index $6$. Eq. (\ref{triplerel}) gives us:
\begin{align}
\begin{split}
\theta_n[\varepsilon_1]\theta[\varepsilon_{135}]\theta[\varepsilon_{145}]\theta[\varepsilon_{134}]-\theta_n[\varepsilon_2]\theta[\varepsilon_{235}]\theta[\varepsilon_{245}]\theta[\varepsilon_{234}]&=\theta_n[K_{\infty}]\theta[\varepsilon_{346}]\theta[\varepsilon_{356}]\theta[\varepsilon_{456}]\\
\equiv\theta_n[\varepsilon_1]\theta[\varepsilon_{24}]\theta[\varepsilon_{23}]\theta[\varepsilon_{25}]-\theta_n[\varepsilon_2]\theta[\varepsilon_{14}]\theta[\varepsilon_{13}]\theta[\varepsilon_{15}]&=\theta_n[K_{\infty}]\theta[\varepsilon_{34}]\theta[\varepsilon_{35}]\theta[\varepsilon_{45}].
\end{split}
\label{126rel}
\end{align}
For other partitions one has to keep in mind the sign in eq. (\ref{triplerel}) and switch the order of the characteristics if required. 

\subsection{General Rosenhain Theorem}
With our chosen partitions eq. (\ref{generalrosenhain}) reads 
\begin{align}\begin{split}
\mathcal{A}^{-1}=\frac{\epsilon}{2\pi^2\Theta_{\{1,2\}}}
\left(\begin{array}{rr}  \theta_2[\varepsilon_1]&  -\theta_2[\varepsilon_2] \\
-\theta_1[\varepsilon_1]&  \theta_1\varepsilon_2]
 \end{array}\right)\mathrm{Diag}\left(\sqrt[4]{ \chi_1}, \sqrt[4]{ \chi_2}  \right).\left( \begin{array}{cc} 
-e_2&1\\
-e_1&1 \end{array} \right).\label{ainverseg2}
\end{split}
\end{align}
Theorem \ref{generalchi} gives us: 
\begin{align}
\begin{split}
\sqrt[4]{\chi_1}=\epsilon\sqrt{e_2-e_1}\frac{\Theta_{2}}{\Theta_{1,2}}\\
\sqrt[4]{\chi_2}=\epsilon\sqrt{e_2-e_1}\frac{\Theta_{1}}{\Theta_{1,2}},
\end{split}
\end{align}
where the previous mentioned reordering of the branch points was applied. Note that $\Theta_1=\Theta_{\{1,6\}}$ and $\Theta_2=\Theta_{\{2,6\}}$.\\
To compare this result with the Rosenhain-memoir \cite{ros851} we apply a Moebius transformation to the curve, which sets $e_1=0$ and $e_2=1$. Now using $\Theta_{\{1,2\}}$, $\Theta_{\{1,6\}}$ and $\Theta_{\{2,6\}}$ as well as eq. (\ref{126rel}) we find:
\begin{align}\begin{split}
\mathcal{A}^{-1}_{1,1}&=-\epsilon\frac{\Theta_{\{2,6\}}}{2\pi^2\Theta_{\{1,2\}}^2} 
\theta_2[\varepsilon_1], \quad 
\mathcal{A}^{-1}_{2,1}=\epsilon\frac{\Theta_{\{2,6\}}}{2\pi^2\Theta_{\{1,2\}}^2} 
\theta_1[\varepsilon_1]\\
\mathcal{A}^{-1}_{1,2}&=\epsilon\frac{1}{2\pi^2\Theta_{\{1,2\}}^2} 
(\Theta_{\{2,6\}}\theta_2[\varepsilon_1]-\Theta_{\{1,6\}}\theta_2[\varepsilon_2])=
\epsilon\frac{1}{2\pi^2\Theta_{\{1,2\}}} \theta_2[K_{\infty}]\\
\mathcal{A}^{-1}_{2,2}&=-\epsilon\frac{1}{2\pi^2\Theta_{\{1,2\}}^2} 
(\Theta_{\{2,6\}}\theta_1[\varepsilon_1]-\Theta_{\{1,6\}}\theta_1[\varepsilon_2])=-
\epsilon\frac{1}{2\pi^2\Theta_{\{1,2\}}} \theta_1[K_{\infty}]
\end{split}
\label{genus2Aminus1}
\end{align}
We now can identify $\delta_1=\varepsilon_1$, $\delta_2=K_{\infty}$, $P=\Theta_{\{2,6\}}$ and $Q=\Theta_{\{1,2\}}$ and hence we have recovered Rosenhain's theorem, eq. (\ref{rosenhain2}), along with the extra identity $(1-a_1)(1-a_2)(1-a_3)=\frac{\Theta^4_{\{1,6\}}}{Q^4}$.\\[0.5cm]
We used here the partition $\{1,2\}\cup\{3,4,5\}$ in order to compare it to Rosenhain's original theorem. But the techniques of Theorem \ref{generalchi} and Lemma \ref{triplerel2} allow for a more general statement:\\
We take the sets $\mathcal{I}_0=\{i,j\}$ and $\mathcal{J}_0=\{k,l,m\}$, all indices mutually disjoint. Again, we normalize the curve to $e_i=0$ and $e_j=1$ by means of a Moebius transformation.\\
One can see, that for a set $\mathcal{I}_0=\{i,j\}$ it is always necessary to pick $6$ as the third index for Lemma \ref{triplerel2} to be applicable in this context. We arrive at the following theorem:
\begin{theorem}[General genus-$2$ Rosenhain Theorem]
For an odd genus-$2$ curve $C$ with normalized branchpoints $e_i=0$, $e_j=1$ and arbitrary branchpoints $e_k$, $e_l$, $e_m$ the inverse period matrix $\mathcal{A}^{-1}$ is given as:
\begin{align}
\mathcal{A}^{-1}=\frac{1}{2\pi^2\Theta^2_{\{i,j\}}}\left[\begin{array}{rr}
-\Theta_{\{j,6\}}\theta_2\left[\varepsilon_i\right]& \Theta_{\{i,j\}}\theta_2\left[K_{\infty}\right]\\
\Theta_{\{j,6\}}\theta_1\left[\varepsilon_i\right]& -\Theta_{\{i,j\}}\theta_1\left[K_{\infty}\right]
\end{array}\right].
\label{finalrosenhain}
\end{align}
\end{theorem}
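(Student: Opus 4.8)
The plan is to rerun the explicit derivation that produced (\ref{genus2Aminus1}), but now keeping the partition $\mathcal{I}_0=\{i,j\}$, $\mathcal{J}_0=\{k,l,m\}$ symbolic throughout. First I would specialise the column formula (\ref{finalanswer1}) (equivalently (\ref{generalrosenhain})) to $g=2$. With $\mathcal{I}_1^{(1)}=\{j\}$ and $\mathcal{I}_1^{(2)}=\{i\}$ one has $\varepsilon(\mathcal{I}_1^{(1)})=\varepsilon_j$ and $\varepsilon(\mathcal{I}_1^{(2)})=\varepsilon_i$, so that the adjugate of the Jacobi matrix is $\mathrm{Adj}(J)=\left(\begin{array}{rr}\theta_2[\varepsilon_i]&-\theta_2[\varepsilon_j]\\-\theta_1[\varepsilon_i]&\theta_1[\varepsilon_j]\end{array}\right)$, the symmetric-function matrix is $\mathcal{S}^T=\left(\begin{array}{rr}-e_j&1\\-e_i&1\end{array}\right)$, and $\mathcal{D}_1=\mathrm{Diag}(\sqrt[4]{\chi_i},\sqrt[4]{\chi_j})$. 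These are precisely the factors of (\ref{ainverseg2}) under the relabelling $1\mapsto i$, $2\mapsto j$, with $\Theta_{\{i,j\}}$ as prefactor.

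Next I would substitute Theorem \ref{generalchi}. For $n=i$ the set $\mathcal{I}_1=\mathcal{I}_0\setminus\{i\}=\{j\}$ gives $\Theta_{\mathcal{I}_1}=\Theta_{\{j,6\}}$, and for $n=j$ it gives $\Theta_{\{i,6\}}$, so that $\sqrt[4]{\chi_i}=\epsilon\sqrt{e_j-e_i}\,\Theta_{\{j,6\}}/\Theta_{\{i,j\}}$ and $\sqrt[4]{\chi_j}=\epsilon\sqrt{e_j-e_i}\,\Theta_{\{i,6\}}/\Theta_{\{i,j\}}$, where the reordering freedom noted after Theorem \ref{generalchi} is used to display a common $\sqrt{e_j-e_i}$. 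The normalisation $e_i=0$, $e_j=1$ assumed in the statement then does two things at once: it collapses $\sqrt{e_j-e_i}$ to $1$, and it turns $\mathcal{S}^T$ into $\left(\begin{array}{rr}-1&1\\0&1\end{array}\right)$. Carrying out the product $\mathrm{Adj}(J)\,\mathcal{D}_1\,\mathcal{S}^T$, the vanishing lower-left entry of $\mathcal{S}^T$ annihilates the $\chi_j$-contribution in the first column, leaving there only the $\varepsilon_i$-terms $-\Theta_{\{j,6\}}\theta_2[\varepsilon_i]$ and $\Theta_{\{j,6\}}\theta_1[\varepsilon_i]$, while the second column acquires the combinations $\Theta_{\{j,6\}}\theta_n[\varepsilon_i]-\Theta_{\{i,6\}}\theta_n[\varepsilon_j]$, $n=1,2$.

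The last step is the triple relation of Lemma \ref{triplerel2}. Because the two theta-products supplied by the $\chi$-formula are labelled by $\{j,6\}$ and $\{i,6\}$, the unique triple combining them with $\Theta_{\{i,j\}}$ is $\{i,j,6\}$; this is exactly why $6$ must be taken as the third index, as claimed before the statement. Applying (\ref{triplerel}) to $\{i,j,6\}$ (the analogue of (\ref{126rel}), with $\varepsilon_6\equiv K_\infty$) converts $\Theta_{\{j,6\}}\theta_n[\varepsilon_i]-\Theta_{\{i,6\}}\theta_n[\varepsilon_j]$ into $\Theta_{\{i,j\}}\theta_n[K_\infty]$, collapsing the second column to its stated form. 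Finally, combining the overall prefactor $\epsilon/(2\pi^2\Theta_{\{i,j\}})$ with the $\epsilon$ and the $1/\Theta_{\{i,j\}}$ carried by $\mathcal{D}_1$ produces $\epsilon^2/(2\pi^2\Theta_{\{i,j\}}^2)$, which reproduces (\ref{finalrosenhain}).

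The hard part will not be the algebra but the bookkeeping of signs and roots of unity. I must determine the sign $\pm$ in (\ref{triplerel}) for the triple $\{i,j,6\}$ for a \emph{general} partition: it depends on the azygetic ordering of the fundamental system and on the order in which the characteristics of $\Theta_{\{i,j\}}$ are listed, and it must come out as the minus sign used above, invoking the freedom to reorder characteristics if it does not. I must also argue that the residual $\epsilon^2$ is $+1$; since every $\epsilon$ occurring here is a fourth root of unity this is a single global sign, which can be fixed by the reordering convention, by matching the already-verified case (\ref{genus2Aminus1}), or by one low-precision numerical evaluation of the $\theta$-constants as suggested after (\ref{firstthomae}).
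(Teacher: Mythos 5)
Your proposal is correct and follows essentially the same route as the paper: specialize eq.~(\ref{generalrosenhain}) to $g=2$ (i.e.\ eq.~(\ref{ainverseg2}) with $1\mapsto i$, $2\mapsto j$), insert $\sqrt[4]{\chi_i}$, $\sqrt[4]{\chi_j}$ from Theorem \ref{generalchi}, normalize $e_i=0$, $e_j=1$, and collapse the second column via Lemma \ref{triplerel2} applied to the triple $\{i,j,6\}$. Your concluding care about the residual root of unity is in fact more explicit than the paper itself, which silently drops $\epsilon$ in passing from eq.~(\ref{genus2Aminus1}) to the stated theorem.
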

In the same fashion one can indicate $\mathcal{A}$ if desired. We therefore invert eq. (\ref{finalrosenhain}) using eq. (\ref{3of6}) one time. We conclude:

\begin{align}
\mathcal{A}=\frac{2\Theta_{\{i,j\}}}{\Theta_{\{i,6\}}\Theta_{\{j,6\}}\theta[\varepsilon_{ij}]}\left[\begin{array}{ll}
\Theta_{\{i,j\}}\theta_1[K_{\infty}]&\Theta_{\{i,j\}}\theta_2[K_{\infty}]\\
\Theta_{\{j,6\}}\theta_1[\varepsilon_i]&\Theta_{\{j,6\}}\theta_2[\varepsilon_i]
\end{array}\right].
\end{align}
Note that this formula incorporates all $10$ even characteristics. Also, the three characteristics in $\Theta_{\{i,j\}}$ sum up to (the odd) $K_{\infty}$ and the three characteristics in $\Theta_{\{j,6\}}$ sum up to (the odd) $\varepsilon_i$.

\section{A genus-3 Rosenhain formula}
We take a hyperelliptic\footnote{If not stated otherwise we always mean hyperelliptic curves.} curve in the form, 
\begin{equation}
y^2=\prod_{k=1}^7(x-e_k)\equiv f(x)=\phi(x)\psi(x), \quad e_k\in \mathbb{C}
\end{equation}
where
\begin{equation} \phi(x)=(x-e_1)(x-e_2)(x-e_3), \quad \psi(x)=(x-e_4)(x-e_5)(x-e_6)(x-e_7),   \end{equation}
in especially we fixed $\mathcal{I}_0=\{1,2,3\}$.
The homology basis is the apparent generalization of Fig. \ref{figure-1}.\\
The characteristics of the Abelian images of branch points are
\begin{align*}
[ \boldsymbol{\mathfrak{A}}_1 ]&= \left[ \begin{array}{ccc}  1&0&0\\ 0&0&0  \end{array} \right], 
\quad [ \boldsymbol{\mathfrak{A}}_2 ]= \left[ \begin{array}{ccc}  1&0&0\\ 1&0&0  \end{array} \right],
 \quad[ \boldsymbol{\mathfrak{A}}_3 ]= \left[ \begin{array}{ccc}  0&1&0\\ 1&0&0  \end{array} \right],\quad 
[ \boldsymbol{\mathfrak{A}}_4 ]= \left[ \begin{array}{ccc}  0&1&0\\ 1&1&0  \end{array} \right],  \\
[ \boldsymbol{\mathfrak{A}}_5 ]&= \left[ \begin{array}{ccc}  0&0&1\\ 1&1&0  \end{array} \right], 
\quad [ \boldsymbol{\mathfrak{A}}_6 ]= \left[ \begin{array}{ccc}  0&0&1\\ 1&1&1  \end{array} \right],
 \quad[ \boldsymbol{\mathfrak{A}}_7 ]= \left[ \begin{array}{ccc}  0&0&0\\ 1&1&1  \end{array} \right],\quad 
[ \boldsymbol{\mathfrak{A}}_8 ]= \left[ \begin{array}{ccc}  0&0&0\\ 0&0&0  \end{array} \right]
\end{align*}
The vector of Riemann constants $\boldsymbol{K}_{\infty}$  with base point at $P_8=\infty$ is given in this homology basis as 
\begin{equation}
\boldsymbol{K}_{\infty}= [  \boldsymbol{\mathfrak{A}}_2 ]+[  \boldsymbol{\mathfrak{A}}_4 ]+ [  \boldsymbol{\mathfrak{A}}_6 ]=[  \boldsymbol{\mathfrak{A}}_1 ]+[  \boldsymbol{\mathfrak{A}}_3 ]+ [  \boldsymbol{\mathfrak{A}}_5 ]+[  \boldsymbol{\mathfrak{A}}_7 ]=\left[ \begin{array}{ccc}  1&1&1\\ 1&0&1  \end{array}\right] 
\end{equation}
The important characteristics are here: 
 \begin{align}\begin{split}{}
[ \varepsilon_i] = \left[\boldsymbol{\mathfrak{A}}_i\right] - \left[\boldsymbol{K}_{\infty}  \right], \quad &i=1,\ldots, 8\\
[\varepsilon_{ij}]= \left[\boldsymbol{\mathfrak{A}}_i\right] +\left[\boldsymbol{\mathfrak{A}}_j\right] - \left[\boldsymbol{K}_{\infty}  \right], \quad &i,j=1,\ldots,8, \, i\neq j,\\
[\varepsilon_{ijk}]= \left[\boldsymbol{\mathfrak{A}}_i\right] +\left[\boldsymbol{\mathfrak{A}}_j\right] +\left[\boldsymbol{\mathfrak{A}}_k\right]- \left[\boldsymbol{K}_{\infty}  \right], \quad &i,j,k=1,\ldots,8, \, k\neq i\neq j\neq k.
\end{split}
\end{align}
  
The Riemann-Jacobi formula for this choice of $\mathcal{I}_0$ (and hence $\mathcal{J}_0=\mathcal{T}_0=\{4,5,6,7\}$) reads
\begin{align}
\begin{split}\left.   \mathrm{Det}\; 
\frac{\partial( \theta[\varepsilon_{23}], \theta[\varepsilon_{13}], \theta[\varepsilon_{12}] ) }{\partial (v_1, v_2, v_3)}
\right\vert_{\boldsymbol{v}=0} =\pi^3\;\theta[\varepsilon_{567}]\;\theta[\varepsilon_{467}]\;\theta[\varepsilon_{457}]\;\theta[\varepsilon_{456}]\;\theta[\varepsilon_{4567}]
=\pi^3\Theta_{\{1,2,3\}}\theta[\varepsilon_{4567}].
\end{split}
\label{RJ3}
\end{align}
Following the necessary steps, eq. (\ref{finalanswer1}) gives us for $\mathcal{A}^{-1}=\left(\boldsymbol{U}_1,\boldsymbol{U}_2,\boldsymbol{U}_3\right)$:
\begin{align}
\begin{split}
\boldsymbol{U}_1=\frac{\epsilon}{2\pi^3\Theta_{\{1,2,3\}}}\mathrm{Adj}(J)\left(
\begin{array}{c}
\sqrt[4]{\chi_1}\,e_2e_3\\
\sqrt[4]{\chi_2}\,e_1e_3\\
\sqrt[4]{\chi_3}\,e_1e_2
\end{array}\right)=&\frac{\epsilon}{2\pi^3\Theta_{\{1,2,3\}}}\mathrm{Adj}(J)\left(
\begin{array}{c}
\sqrt[4]{\chi_1}\,e_3\\
0\\
0
\end{array}\right),\\
\boldsymbol{U}_2=\frac{\epsilon}{2\pi^3\Theta_{\{1,2,3\}}}\mathrm{Adj}(J)\left(
\begin{array}{l}
\sqrt[4]{\chi_1}\,\left(e_2-e_3\right)\\
\sqrt[4]{\chi_2}\,\left(e_1-e_3\right)\\
\sqrt[4]{\chi_3}\,\left(e_1-e_2\right)
\end{array}\right)=&\frac{\epsilon}{2\pi^3\Theta_{\{1,2,3\}}}\mathrm{Adj}(J)\left(
\begin{array}{l}
-\sqrt[4]{\chi_1}\,\left(e_3-1\right)\\
-\sqrt[4]{\chi_2}\,e_3\\
-\sqrt[4]{\chi_3}
\end{array}\right),\\
\boldsymbol{U}_3=&\frac{\epsilon}{2\pi^3\Theta_{\{1,2,3\}}}\mathrm{Adj}(J)\left(
\begin{array}{c}
\sqrt[4]{\chi_1}\\
\sqrt[4]{\chi_2}\\
\sqrt[4]{\chi_3}
\end{array}\right),
\end{split}
\label{U1U2U3}
\end{align}
where we normalized again to $e_1=0,\, e_2=1$ ,but $e_3$ can't be expressed within our technique in the resulting formulae. We now can insert $\chi_k$ from Lemma \ref{generalchi} into eq. (\ref{U1U2U3}):
\begin{align}
\begin{split}
\boldsymbol{U}_1&=\frac{\epsilon}{2\pi^3\Theta_{\{1,2,3\}}^{3/2}}\mathrm{Adj}(J)\left(
\begin{array}{l}
\Theta_{\{23\}}^{1/2}\cdot e_3^{5/4}\\
0\\
0
\end{array}\right),\\
\boldsymbol{U}_2&=\frac{\epsilon}{2\pi^3\Theta_{\{1,2,3\}}^{3/2}}\mathrm{Adj}(J)\left(
\begin{array}{l}
-\Theta_{\{23\}}^{1/2}\cdot e_3^{1/4}\cdot\left(e_3-1\right)\\
-\Theta_{\{13\}}^{1/2}\cdot e_3\cdot\left(e_3-1\right)^{1/4}\\
-\Theta_{\{12\}}^{1/2}\cdot e_3^{1/4}\cdot\left(e_3-1\right)^{1/4}
\end{array}\right),\\
\boldsymbol{U}_3&=\frac{\epsilon}{2\pi^3\Theta_{\{1,2,3\}}^{3/2}}\mathrm{Adj}(J)\left(
\begin{array}{l}
\Theta_{\{23\}}^{1/2}\cdot e_3^{1/4}\\
\Theta_{\{13\}}^{1/2}\cdot\left(e_3-1\right)^{1/4}\\
\Theta_{\{12\}}^{1/2}\cdot e_3^{1/4}\cdot\left(e_3-1\right)^{1/4}
\end{array}\right).
\end{split}
\label{U1U2U3final}
\end{align}
If required, we could use eq. (\ref{notinverseA}) to arrive at $\mathcal{A}$. But currently we see no further simplifications and therefore didn't depict it here.
\section{Concluding remarks \& Acknowledgments}
Without any major changes, one can adopt the method shown for genus $3$ to higher genera. Unfortunately we were not able to find a generalization to Lemma \ref{triplerel2}, which could bring eq. (\ref{U1U2U3final}) down to a structure like in eq. (\ref{rosenhain2}). It seems unlikely that there exists one as simple as in genus $2$.\\
Our next steps in this work could be to unfix the base point, which was infinity throughout this work. And we see a chance to develop Thomae type formulae expressing higher derivative $\theta$-constants. We hope to come back to this topic in the near future.\\[0.5cm]
The author wants to thank Victor Enolskii for providing the idea of the work and many suggestions for useful techniques as well as the constant interest in the work.
Also the author gratefully acknowledges the Deutsche Forschungsgemeinschaft (DFG) for financial support within the framework of the DFG Research Training group 1620 Models of gravity.

\section{Appendix A: Rosenhain derivative formulae}

For any two odd characteristics $[\delta_1]$, $[\delta_2]$ denote 
\[  D \left[  \delta_1;\delta_2  \right] = \theta_1[\delta_1] \theta_2[\delta_2]- \theta_2[\delta_1] \theta_1[\delta_2]  \]
Then the following 15 Rosenhain derivative formulae are valid
\begin{eqnarray*}
D\left(
\left[\begin{array}{ll}\scriptstyle{0}&\!\!\!\!\scriptstyle{1}\cr
\scriptstyle{0}&\!\!\!\!\scriptstyle{1}\end{array}\right],
\left[\begin{array}{ll}\scriptstyle{1}&\!\!\!\!\scriptstyle{1}\cr
\scriptstyle{0}&\!\!\!\!\scriptstyle{1}\end{array}\right]
\right)=\pi^2
\theta\ma{0}{0}{1}{0}\theta\ma{0}{0}{1}{1}\theta\ma{1}{1}{1}{1}
\theta\ma{0}{1}{1}{0},\quad\left(\ma{1}{0}{0}{0}\right);
\end{eqnarray*}
\begin{eqnarray*}
D\left(
\left[\begin{array}{ll}\scriptstyle{1}&\!\!\!\!\scriptstyle{1}\cr
\scriptstyle{1}&\!\!\!\!\scriptstyle{0}\end{array}\right],
\left[\begin{array}{ll}\scriptstyle{1}&\!\!\!\!\scriptstyle{0}\cr
\scriptstyle{1}&\!\!\!\!\scriptstyle{0}\end{array}\right]
\right)=\pi^2
\theta\ma{1}{1}{1}{1}\theta\ma{0}{0}{0}{1}\theta\ma{0}{0}{1}{1}
\theta\ma{1}{0}{0}{1},\quad\left(\ma{0}{1}{0}{0}\right);
\end{eqnarray*}
\begin{eqnarray*}
D\left(
\left[\begin{array}{ll}\scriptstyle{1}&\!\!\!\!\scriptstyle{0}\cr
\scriptstyle{1}&\!\!\!\!\scriptstyle{1}\end{array}\right],
\left[\begin{array}{ll}\scriptstyle{0}&\!\!\!\!\scriptstyle{1}\cr
\scriptstyle{1}&\!\!\!\!\scriptstyle{1}\end{array}\right]
\right)=\pi^2
\theta\ma{0}{1}{1}{0}\theta\ma{1}{0}{0}{1}\theta\ma{0}{0}{1}{0}
\theta\ma{0}{0}{0}{1},\quad\left(\ma{1}{1}{0}{0}\right);
\end{eqnarray*}
\begin{eqnarray*}
D\left(
\left[\begin{array}{ll}\scriptstyle{0}&\!\!\!\!\scriptstyle{1}\cr
\scriptstyle{1}&\!\!\!\!\scriptstyle{1}\end{array}\right],
\left[\begin{array}{ll}\scriptstyle{0}&\!\!\!\!\scriptstyle{1}\cr
\scriptstyle{0}&\!\!\!\!\scriptstyle{1}\end{array}\right]
\right)=\pi^2
\theta\ma{1}{0}{0}{0}\theta\ma{1}{0}{0}{1}\theta\ma{1}{1}{0}{0}
\theta\ma{1}{1}{1}{1},\quad\left(\ma{0}{0}{1}{0}\right);
\end{eqnarray*}

\begin{eqnarray*}
D\left(
\left[\begin{array}{ll}\scriptstyle{0}&\!\!\!\!\scriptstyle{1}\cr
\scriptstyle{1}&\!\!\!\!\scriptstyle{1}\end{array}\right],
\left[\begin{array}{ll}\scriptstyle{1}&\!\!\!\!\scriptstyle{1}\cr
\scriptstyle{0}&\!\!\!\!\scriptstyle{1}\end{array}\right]
\right)=\pi^2
\theta\ma{0}{0}{0}{0}\theta\ma{0}{0}{0}{1}\theta\ma{1}{1}{1}{1}
\theta\ma{0}{1}{0}{0},\quad\left(\ma{1}{0}{1}{0}\right);
\end{eqnarray*}
\begin{eqnarray*}
D\left(
\left[\begin{array}{ll}\scriptstyle{1}&\!\!\!\!\scriptstyle{0}\cr
\scriptstyle{1}&\!\!\!\!\scriptstyle{1}\end{array}\right],
\left[\begin{array}{ll}\scriptstyle{1}&\!\!\!\!\scriptstyle{1}\cr
\scriptstyle{0}&\!\!\!\!\scriptstyle{1}\end{array}\right]
\right)=\pi^2
\theta\ma{1}{1}{0}{0}\theta\ma{0}{0}{1}{1}\theta\ma{0}{0}{0}{1}
\theta\ma{1}{0}{0}{0},\quad\left(\ma{0}{1}{1}{0}\right);
\end{eqnarray*}
\begin{eqnarray*}
D\left(
\left[\begin{array}{ll}\scriptstyle{1}&\!\!\!\!\scriptstyle{0}\cr
\scriptstyle{1}&\!\!\!\!\scriptstyle{1}\end{array}\right],
\left[\begin{array}{ll}\scriptstyle{0}&\!\!\!\!\scriptstyle{1}\cr
\scriptstyle{0}&\!\!\!\!\scriptstyle{1}\end{array}\right]
\right)=\pi^2
\theta\ma{0}{1}{0}{0}\theta\ma{1}{0}{0}{1}\theta\ma{0}{0}{0}{0}
\theta\ma{0}{0}{1}{1},\quad\left(\ma{1}{1}{1}{0}\right);\end{eqnarray*}

\begin{eqnarray*}
D\left(
\left[\begin{array}{ll}\scriptstyle{1}&\!\!\!\!\scriptstyle{0}\cr
\scriptstyle{1}&\!\!\!\!\scriptstyle{0}\end{array}\right],
\left[\begin{array}{ll}\scriptstyle{1}&\!\!\!\!\scriptstyle{0}\cr
\scriptstyle{1}&\!\!\!\!\scriptstyle{1}\end{array}\right]
\right)=\pi^2
\theta\ma{0}{1}{0}{0}\theta\ma{0}{1}{1}{0}\theta\ma{1}{1}{1}{1}
\theta\ma{1}{1}{0}{0},\quad\left(\ma{0}{0}{0}{1}\right);
\end{eqnarray*}
\begin{eqnarray*}
D\left(
\left[\begin{array}{ll}\scriptstyle{1}&\!\!\!\!\scriptstyle{1}\cr
\scriptstyle{1}&\!\!\!\!\scriptstyle{0}\end{array}\right],
\left[\begin{array}{ll}\scriptstyle{0}&\!\!\!\!\scriptstyle{1}\cr
\scriptstyle{1}&\!\!\!\!\scriptstyle{1}\end{array}\right]
\right)=\pi^2
\theta\ma{0}{0}{1}{1}\theta\ma{0}{0}{1}{0}\theta\ma{1}{1}{0}{0}
\theta\ma{0}{1}{0}{0},\quad\left(\ma{1}{0}{0}{1}\right);\end{eqnarray*}
\begin{eqnarray*}
D\left(
\left[\begin{array}{ll}\scriptstyle{1}&\!\!\!\!\scriptstyle{1}\cr
\scriptstyle{1}&\!\!\!\!\scriptstyle{0}\end{array}\right],
\left[\begin{array}{ll}\scriptstyle{1}&\!\!\!\!\scriptstyle{0}\cr
\scriptstyle{1}&\!\!\!\!\scriptstyle{1}\end{array}\right]
\right)=\pi^2
\theta\ma{1}{1}{1}{1}\theta\ma{0}{0}{0}{0}\theta\ma{0}{0}{1}{0}
\theta\ma{1}{0}{0}{0},\quad\left(\ma{0}{1}{0}{1}\right);\end{eqnarray*}
\begin{eqnarray*}
D\left(
\left[\begin{array}{ll}\scriptstyle{1}&\!\!\!\!\scriptstyle{0}\cr
\scriptstyle{1}&\!\!\!\!\scriptstyle{0}\end{array}\right],
\left[\begin{array}{ll}\scriptstyle{0}&\!\!\!\!\scriptstyle{1}\cr
\scriptstyle{1}&\!\!\!\!\scriptstyle{1}\end{array}\right]
\right)=\pi^2
\theta\ma{0}{1}{1}{0}\theta\ma{1}{0}{0}{0}\theta\ma{0}{0}{1}{1}
\theta\ma{0}{0}{0}{0},\quad\left(\ma{1}{1}{0}{1}\right);\end{eqnarray*}

\begin{eqnarray*}
D\left(
\left[\begin{array}{ll}\scriptstyle{1}&\!\!\!\!\scriptstyle{1}\cr
\scriptstyle{1}&\!\!\!\!\scriptstyle{0}\end{array}\right],
\left[\begin{array}{ll}\scriptstyle{1}&\!\!\!\!\scriptstyle{1}\cr
\scriptstyle{0}&\!\!\!\!\scriptstyle{1}\end{array}\right]
\right)=\pi^2
\theta\ma{1}{0}{0}{1}\theta\ma{1}{0}{0}{0}\theta\ma{0}{1}{1}{0}
\theta\ma{0}{1}{0}{0},\quad\left(\ma{0}{0}{1}{1}\right);\end{eqnarray*}
\begin{eqnarray*}
D\left(
\left[\begin{array}{ll}\scriptstyle{1}&\!\!\!\!\scriptstyle{1}\cr
\scriptstyle{1}&\!\!\!\!\scriptstyle{0}\end{array}\right],
\left[\begin{array}{ll}\scriptstyle{0}&\!\!\!\!\scriptstyle{1}\cr
\scriptstyle{0}&\!\!\!\!\scriptstyle{1}\end{array}\right]
\right)=\pi^2
\theta\ma{0}{0}{0}{1}\theta\ma{0}{0}{0}{0}\theta\ma{1}{1}{0}{0}
\theta\ma{0}{1}{1}{0},\quad\left(\ma{1}{0}{1}{1}\right);\end{eqnarray*}
\begin{eqnarray*}
D\left(
\left[\begin{array}{ll}\scriptstyle{1}&\!\!\!\!\scriptstyle{0}\cr
\scriptstyle{1}&\!\!\!\!\scriptstyle{0}\end{array}\right],
\left[\begin{array}{ll}\scriptstyle{1}&\!\!\!\!\scriptstyle{1}\cr
\scriptstyle{0}&\!\!\!\!\scriptstyle{1}\end{array}\right]
\right)=\pi^2
\theta\ma{1}{1}{0}{0}\theta\ma{0}{0}{1}{0}\theta\ma{0}{0}{0}{0}
\theta\ma{1}{0}{0}{1},\quad\left(\ma{0}{1}{1}{1}\right);\end{eqnarray*}
\begin{eqnarray*}
D\left(
\left[\begin{array}{ll}\scriptstyle{1}&\!\!\!\!\scriptstyle{0}\cr
\scriptstyle{1}&\!\!\!\!\scriptstyle{0}\end{array}\right],
\left[\begin{array}{ll}\scriptstyle{0}&\!\!\!\!\scriptstyle{1}\cr
\scriptstyle{0}&\!\!\!\!\scriptstyle{1}\end{array}\right]
\right)=\pi^2
\theta\ma{0}{1}{0}{0}\theta\ma{1}{0}{0}{0}\theta\ma{0}{0}{0}{1}
\theta\ma{0}{0}{1}{0},\quad\left(\ma{1}{1}{1}{1}\right).
\end{eqnarray*}
We pointed at the right margin the characteristic, which is the
sum of characteristics of each entry to the corresponding
equality.

\end{document}